\newcommand*{\circled}[1]{\lower.7ex\hbox{\tikz\draw (0pt, 0pt)%
    circle (.5em) node {\makebox[1em][c]{\small #1}};}}
\newtheorem{theorem}{Theorem}
\newtheorem{lemma}{Lemma}
\newtheorem{corollary}{Corollary}
\theoremstyle{definition}
\newtheorem{remark}{Remark}
\numberwithin{equation}{section}
\begin{document}
\title[Linear operators on Bergman spaces]%
{A class of linear operators on Bergman spaces}

\author{Zengjian Lou*}
\address{Department of Mathematics, Shantou University, Shantou, Guangdong,
515063, \newline P.~R.~China}
\email{zjlou@stu.edu.cn}

\author{Antti Rasila*}
\address{Department of Mathematics with Computer Science,
Guangdong Technion,
Shantou, Guangdong, 515063, \newline P. R. China  \newline
and Department of Mathematics, Technion -- Israel Institute of Technology, Haifa 32000, \newline
Israel.
}
\email{antti.rasila@iki.fi; antti.rasila@gtiit.edu.cn}

\author{Senhua Zhu}
\address{Department of Mathematics,
Shantou University,
Shantou, Guangdong, 515063, \newline P. R. China}
\email{shzhu@stu.edu.cn}

\begin{abstract}
We study the boundedness of the linear operator $S$ on $L^{p}_{a}(dA_{\alpha})~(0<p<\infty)$. In particular, we obtain a sufficient and necessary condition for the compactness of the linear operator $S$ on $L^{p}_{a}(dA_{\alpha})~(1<p<\infty)$. Our results weaken the assumptions of earlier results of J. Miao and D. Zheng in a certain sense.
\end{abstract}
\subjclass[2010]{Primary 30H20, Secondary  47B32, 46E22}

\thanks{
The research was supported by NNSF of China (No. 12471093, 12001082), NSF of Guangdong Province (No. 2024A1515010468, 2024A1515010467) and LKSF STU-GTIIT
Joint Research Grant (No. 2024LKSFG06).}
 \thanks{* Corresponding authors.}
\keywords{Bergman spaces, linear operators, atomic decomposition, Berezin transform}

\maketitle

\section{Introduction}

Let $\mathbb{D}$ denote the open unit disk in the complex plane $\mathbb{C}$, and let $dA$ denote the normalized area measure on $\mathbb{D}$.
For $0< p<\infty$, the Bergman space $L_a^p(dA_\alpha)~(\alpha>-1)$ is the space consisting of all analytic functions on $\mathbb{D}$ that belong to $L^p(dA_\alpha)$, where $dA_\alpha(z)=(\alpha+1)(1-|z|^2)^\alpha dA(z)$.
For $1\leq p<\infty$, $L_a^p(dA_\alpha)$ is a Banach space with the norm $\|\cdot\|_{p,\alpha}$ defined as follows:
$$\|f\|_{p,\alpha}=\Big(\int_{\mathbb{D}}|f(z)|^{p}\, dA_{\alpha}(z)\Big)^{\frac{1}{p}}.$$
It is well-known that $L^2_a(dA_\alpha)$ is a Hilbert space with the inner product defined by
$$\langle f,g\rangle=\int_\mathbb{D}f(z)\overline{g(z)}\,dA_\alpha(z),~f,g\in L^2_a(dA_\alpha).$$

For $1<p<\infty$ and $\frac{1}{p}+\frac{1}{q}=1$, we define the integral pairing
$$\langle f,g\rangle=\int_{\mathbb{D}}f(z)\overline{g(z)}\, dA_\alpha(z),$$ where $f\in L_a^p(dA_\alpha)$ and $g\in L_a^q(dA_\alpha)$.

The function
$$K(w,z)=K_{z}(w)=\frac{1}{(1-\overline{z}w)^{2+\alpha}}$$
 is called the reproducing kernel in $L_a^2(dA_\alpha)$.
For each $z \in \mathbb{D}$, the function $k_{z}(w)$ defined by
$$k_{z}(w)=(1-|z|^{2})^{\frac{2+\alpha}{2}}K_{z}(w)$$
is called the normalized reproducing kernel at $z$. It is also in $L^p_a(dA_\alpha)$ for $1\leq p<\infty$.

An interesting problem is to consider the boundedness and compactness of linear operators on Bergman spaces.
By a variety of methods, some  remarkable work has been done on boundedness and compactness of Toeplitz operators with variable symbols, see \cite{Sheldon1,Sheldon,Daniel,zheng}.
Moreover, a list of open problems about the boundedness, compactness and Fredholm properties of Toeplitz operators have been summarized in \cite{Antti}.

Recently, Mitkovski and Wick \cite{Mishko} provided a sufficient condition for the boundedness of general operator $T$ on reproducing kernel Hilbert spaces $H$, which consist of holomorphic functions in $L^2(\Omega,d\mu)$, where $\Omega$ is a domain in $\mathbb{C}^n$ and $\mu$ is a certain finite measure on $\Omega$. Furthermore, under the aforementioned boundedness assumptions, the operator $T$ is compact on $H$ if and only if $\|Tk_z\|\rightarrow 0$  as $d(z,0) \rightarrow \infty$.

For a bounded linear operator $S$ on $L_a^p(dA_\alpha)~(0<p<\infty)$, the operator $S_{z}$ is defined by $S_{z}=U_{z}SU_{z}$, where $U_z$ is a bounded operator on $L_a^p(dA_\alpha)$ defined by
$$U_{z}f=\big(f\circ\varphi_{z}\big) \cdot k_{z}. $$
For $z\in\mathbb{D}$, let $\varphi_{z}$ be the M\"obius transformation  on $\mathbb{D}$ defined by
$$\varphi_{z}(w)=\frac{z-w}{1-\overline{z}w},~ w \in \mathbb{D}.$$
Furthermore, $U_z$ is a unitary operator on $L_a^2(dA_\alpha)$.

In 2004, Miao and Zheng \cite{Jie} showed that if $S$ is a bounded operator on $L_a^p(dA)~(1<p<\infty)$ such that
 $$\sup_{z\in \mathbb{D}}\|S_{z}1\|_{m}<\infty,~ \sup_{z\in \mathbb{D}}\|S_{z}^{\ast}1\|_{m}<\infty,$$
for some $m>3/(p-1)$,
 then $S$ is compact on $L_a^p(dA)~(1<p<\infty)$ if and only if $\widetilde{S}(z) \rightarrow 0$ as $z \rightarrow \partial \mathbb{D}$. Here, $\widetilde{S}$ denotes the Berezin transform of $S$, defined by
$$\widetilde{S}(z)=\langle Sk_{z},k_{z}\rangle,~ z\in \mathbb{D}.$$

In particular, the result for $p=2$ agrees with the special case $\Omega=\mathbb{D}$ result of Mitkovski and Wick \cite{Mishko}. Similar results on Fock spaces have also been studied by the authors of this paper in \cite{Lou} (see also \cite{Xiao}).

In this paper, we further explore the boundedness and compactness of linear operators $S$ on the Bergman space $L_a^p(dA_\alpha)$ using a different method.
We first establish a sufficient condition for the boundedness of a linear operator $S$ on $L^{p}_{a}(dA_{\alpha})$.
We then derive the compactness of $S$ on $L^{p}_{a}(dA_{\alpha})$, where the requirement of $\|S^{\ast}_{z}1\|_{m}$ is dropped, in contrast to the result in \cite{Jie} (see Remark \ref{rem1} for details). It is well known that linear combinations of reproducing kernels are dense in  Bergman spaces. From the definition of $S_{z}$, we have
$$(S_{z}1)(w)=k_{z}(\varphi_{z}(w)) \cdot k_{z}(w).$$
This indicates that if $S$ is well defined on reproducing kernels, then $S_{z}1$ is well defined.

Throughout this paper, we assume that the domain of $S$ contains all reproducing kernels. Our main results are as follows:

\newtheorem*{them1}{Theorem A}
 \begin{them1}
Suppose that $1<p<\infty$, $\alpha>-1$ and $S$ is a linear operator on $L^{p}_{a}(dA_{\alpha})$. If there exist a positive constant $C$ and some $m>p\frac{2+\alpha}{1+\alpha}\max\{1,\frac{1}{p-1}\}$, such that $\|S_{z}1\|_{m,\alpha}\leq C$ for all $z \in \mathbb{D}$, then $S$ is bounded.
\end{them1}

\newtheorem*{them3}{Theorem B}
\begin{them3}\label{B1}
Suppose that $1<p<\infty$, $\alpha>-1$ and $S$ is a linear operator on $L_{a}^{p}(dA_{\alpha})$. If there exist a positive constant $C$ and some $m>p\frac{2+\alpha}{1+\alpha} \max\{1,\frac{1}{p-1}\}$, such that $\|S_{z}1\|_{m,\alpha}\leq C$ for all $z \in \mathbb{D}$, then $S$ is compact if and only if $\widetilde{S}(z) \rightarrow 0$ as $z \rightarrow \partial \mathbb{D}$.
\end{them3}


\begin{remark}
\label{rem1}


 Theorem B weakens the assumption of Theorem 1.1 in \cite{Jie}.
Specifically, the condition $\sup_{z\in \mathbb{D}}\|S_{z}^{\ast}1\|_{m}<\infty$ has been removed in Theorem B.
On the other hand, when $\alpha=0$ and $1<p<\frac{3}{2}$, it is easy to verify that
$$p\frac{2+\alpha}{1+\alpha} \max \Big\{1,\frac{1}{p-1}\Big\}< \frac{3}{p-1}.$$
Thus, our requirement for $m$ is weaker than that in \cite{Jie}, because $\|f\|_{p} \leq \|f\|_{q}$ whenever $p<q$.


\end{remark}

\section{Bounded linear operators}

In this section, we provide sufficient conditions for the boundedness of linear operators on Bergman spaces $L^{p}_{a}(dA_\alpha)$.

The {\it Bergman disk} is defined by $D(a,r)=\{z\in \mathbb{D}:~ \beta(z,a)<r\}$
where	
$$
\beta(z,w)=\frac{1}{2}\log\frac{1+\rho(z,w)}{1-\rho(z,w)}
$$
is the {\it Bergman metric}, and
$$
\rho(z,w)=\frac{|z-w|}{|1-z\overline{w}|}. 
$$
For  $r>0 $ and $a_k\in\mathbb{D}$, we say that $\{a_k\} $ is a {\it  $r$-lattice} if
$$
\mathbb{D}=\bigcup_kD(a_k,r); \ \ \
       \beta(a_i,a_j)\geq t/2  \  \text{ for all} \  i, j \ \text{with}\  i\neq j.
$$

The following lemmas are used in our proofs:


\begin{lemma}\cite[Theorem 4.33]{zhu}\label{lemy2}
Suppose $p>0$, $\alpha>-1$, and $$b>\max\Big\{1,\frac{1}{p}\Big\}+\frac{\alpha+1}{p}.$$
Then there exists a constant $\sigma>0$ such that for any $r$-lattice $\{a_{k}\}$ in the Bergman metric, where $0<r<\sigma$, the space $L^{p}_{a}(dA_{\alpha})$ consists exactly of functions of the form
\begin{equation}\label{formula1}
  f(z)=\sum_{k=1}^{\infty}c_{k}\frac{(1-|a_{k}|^{2})^{\frac{pb-2-\alpha}{p}}}{(1-z\overline{a_{k}})^{b}},
\end{equation}
where $\{c_{k}\} \in l^{p}$. The series in $(\ref{formula1})$ converges in the norm of  $L^{p}_{a}(dA_{\alpha})$, and the norm of $f$ in $L^{p}_{a}(dA_{\alpha})$, $\|f\|_{p,\alpha}$, is comparable to
$$\|f\|=\inf\Big\{\Big[\sum_{k=1}^{\infty}|c_{k}|^{p}\Big]^{\frac{1}{p}}:\{c_{k}\} ~ \text{satisfies} ~(\ref{formula1})\Big\}.$$
\end{lemma}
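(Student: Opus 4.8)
The plan is to establish this atomic decomposition by exhibiting explicit synthesis and analysis operators and estimating their norms. Fix $b$ as in the hypothesis, set $a=\frac{pb-2-\alpha}{p}$, and for an $r$-lattice $\{a_k\}$ define the synthesis operator
$$V\{c_k\}(z)=\sum_{k}c_k\,\frac{(1-|a_k|^2)^{a}}{(1-z\overline{a_k})^{b}}.$$
Choosing a Borel partition $\mathbb{D}=\bigcup_k D_k$ with $D(a_k,r/2)\subseteq D_k\subseteq D(a_k,r)$ and recalling that $A(D_k)\asymp(1-|a_k|^2)^2$, I would define the analysis operator on a holomorphic $f$ by $Wf=\{c_k\}$, $c_k=(b-1)A(D_k)(1-|a_k|^2)^{b-2-a}f(a_k)$, so that $|c_k|\asymp|f(a_k)|(1-|a_k|^2)^{(2+\alpha)/p}$. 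The argument then proceeds in three steps: (i) $V$ maps $l^p$ boundedly into $L^p_a(dA_\alpha)$; (ii) $W$ maps $L^p_a(dA_\alpha)$ boundedly into $l^p$; (iii) $\|f-VWf\|_{p,\alpha}\le\varepsilon(r)\,\|f\|_{p,\alpha}$ with $\varepsilon(r)\to0$ as $r\to0$. Granting (i)--(iii), for $r<\sigma$ small enough $VW$ is invertible on $L^p_a(dA_\alpha)$ by a Neumann series, so every $f\in L^p_a(dA_\alpha)$ equals $V\big(W(VW)^{-1}f\big)$, a series of the form \eqref{formula1} whose coefficients have $l^p$-norm $\lesssim\|f\|_{p,\alpha}$; conversely (i) applied to an arbitrary admissible $\{c_k\}$ gives $\|f\|_{p,\alpha}\lesssim\|\{c_k\}\|_{l^p}$, and taking the infimum over such $\{c_k\}$ yields the norm comparison $\|f\|_{p,\alpha}\asymp\|f\|$.

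For (i) the workhorse is the Forelli--Rudin estimate
$$\int_{\mathbb{D}}\frac{(1-|w|^2)^{t}}{|1-z\overline{w}|^{2+t+c}}\,dA(w)\asymp(1-|z|^2)^{-c}\qquad(t>-1,\ c>0),$$
and its discrete analogue, obtained by comparing a lattice sum to this integral via $(1-|w|^2)\asymp(1-|a_k|^2)$ and $|1-z\overline{w}|\asymp|1-z\overline{a_k}|$ on $D(a_k,r)$, together with the bounded overlap of the dilated disks $\{D(a_k,r)\}$. When $0<p\le1$ one applies $|\sum_k x_k|^p\le\sum_k|x_k|^p$ and integrates $|V\{c_k\}|^p$ term by term; the estimate above supplies the exponent $ap+2+\alpha-bp=0$, so $\|V\{c_k\}\|_{p,\alpha}^p\lesssim\sum_k|c_k|^p$ as soon as $bp>2+\alpha$. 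When $1<p<\infty$ one splits $(1-z\overline{a_k})^{-b}$ into two factors, applies H\"older's inequality, and reduces again to the same integral estimate. In both regimes the exponent bookkeeping closes exactly because $b>\max\{1,\tfrac{1}{p}\}+\tfrac{\alpha+1}{p}$, and the same computation shows that each individual atom lies in $L^p_a(dA_\alpha)$.

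For (ii), subharmonicity of $|f|^p$ on $D(a_k,r)$ gives $|f(a_k)|^p(1-|a_k|^2)^{2+\alpha}\lesssim\int_{D(a_k,r)}|f|^p\,dA_\alpha$; summing over $k$ and using the bounded overlap of $\{D(a_k,r)\}$ yields $\sum_k|c_k|^p\lesssim\|f\|_{p,\alpha}^p$, uniformly for $r\le1$. For (iii) I would start from the representation formula
$$f(z)=(b-1)\int_{\mathbb{D}}\frac{f(w)(1-|w|^2)^{b-2}}{(1-z\overline{w})^{b}}\,dA(w),$$
which is valid for every $f$ that also lies in $L^1_a(dA_{b-2})$ --- a subspace dense in $L^p_a(dA_\alpha)$, since it contains the bounded analytic functions and the polynomials. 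For such $f$, $VWf(z)$ is exactly the Riemann sum of this integral obtained by freezing $w$ at $a_k$ on each $D_k$, so
$$f-VWf=(b-1)\sum_k\int_{D_k}\Big[\frac{f(w)(1-|w|^2)^{b-2}}{(1-z\overline{w})^{b}}-\frac{f(a_k)(1-|a_k|^2)^{b-2}}{(1-z\overline{a_k})^{b}}\Big]\,dA(w).$$
Bounding the bracket by the oscillation of the integrand over the Bergman disk $D_k$ --- which is a small multiple (small precisely because $r$ is small) of its supremum there, while what remains is Forelli--Rudin dominated --- and then running a Schur test with an auxiliary power of $1-|z|^2$, one gets $\|f-VWf\|_{p,\alpha}\le\varepsilon(r)\|f\|_{p,\alpha}$ on the dense subspace; since $VW$ and the identity are bounded on $L^p_a(dA_\alpha)$, this inequality extends to the whole space.

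The hard part will be step (iii): one has to quantify how little the kernel $w\mapsto(1-z\overline{w})^{-b}$ and the function $f$ oscillate over a Bergman disk, and then turn that into a genuinely small operator norm for $I-VW$ through a Schur-type bound --- and it is exactly this that forces the lattice to be chosen fine enough ($r<\sigma$). Steps (i) and (ii) are, by contrast, routine once the Forelli--Rudin estimate, the sub-mean-value property, and the finite overlap of Bergman disks are in hand.
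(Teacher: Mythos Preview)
The paper does not prove this lemma at all --- it is stated with a citation to \cite[Theorem 4.33]{zhu} and used as a black box. Your sketch is the classical Coifman--Rochberg argument, which is exactly the proof given in Zhu's book, so in that sense you have reconstructed the intended proof: the paper later (in the proof of Lemma~\ref{lem2}) invokes precisely your operator $VW$, written there as $Tf(z)=\sum_k A_\alpha(D_k)f(a_k)(1-\overline{a_k}z)^{-(2+\alpha)}$, and appeals to ``the proof of Lemma~\ref{lemy2}'' for its boundedness and invertibility.
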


%
\begin{lemma}\cite[Theorem 1.3]{Liuc}\label{lemy15}
Suppose $z\in \mathbb{D}$, $c$ is real, $t>-1$, and $$I_{c,t}(z)=\int_{\mathbb{D}}\frac{(1-|w|^{2})^{t}}{|1-z\overline{w}|^{2+t+c}}\, dA(w).$$
\begin{itemize}
  \item [(a)] If $c<0$, then for all $z\in \mathbb{D}$,
$$\frac{\Gamma(1+t)}{\Gamma(2+t)}\leq I_{c,t}(z)\leq \frac{\Gamma(1+t)\Gamma(-c)}{\Gamma\left(\frac{2+t-c}{2}\right)^{2}}.$$
  \item [(b)] If $c>0$, then for all $z\in \mathbb{D}$,  $$\frac{\Gamma(1+t)}{\Gamma(2+t)}\leq (1-|z|^{2})^{c} I_{c,t}(z)\leq \frac{\Gamma(1+t)\Gamma(c)}{\Gamma\left(\frac{2+t+c}{2}\right)^{2}}.$$
  \item [(c)] If $c=0$, then for all $z\in \mathbb{D}$,
$$\frac{\Gamma(1+t)}{\Gamma(1+\frac{t}{2})}\leq |z|^{2}\Big(\log\frac{1}{1-|z|^{2}}\Big)^{-1} I_{c,t}(z)\leq \frac{1}{1+t}.$$
\end{itemize}
\end{lemma}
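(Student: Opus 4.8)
The plan is to collapse $I_{c,t}(z)$ into a single Gauss hypergeometric function and then read off the three regimes from its endpoint behaviour. First I would write, with $b=\frac{2+t+c}{2}$,
\[
\frac{1}{|1-z\overline{w}|^{2+t+c}}=\frac{1}{(1-z\overline{w})^{b}}\,\frac{1}{(1-\overline{z}w)^{b}},
\]
expand each factor by the binomial series $(1-u)^{-b}=\sum_{n\ge0}\frac{(b)_n}{n!}u^n$ (with $(b)_n=\Gamma(b+n)/\Gamma(b)$ the Pochhammer symbol), multiply the two series, and integrate term by term against $(1-|w|^2)^t\,dA(w)$ in polar coordinates. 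The angular integration annihilates all off-diagonal terms, while the radial integral is the Beta integral $\int_0^1 s^n(1-s)^t\,ds=\frac{n!\,\Gamma(1+t)}{\Gamma(n+t+2)}$. After simplification this collapses to
\[
I_{c,t}(z)=\frac{\Gamma(1+t)}{\Gamma(2+t)}\,{}_2F_1\!\bigl(b,b;\,t+2;\,|z|^2\bigr).
\]
The whole statement then becomes a question about this ${}_2F_1$ on $[0,1)$, the crucial point being that its excess parameter is $(t+2)-b-b=-c$, so that the sign of $c$ dictates whether the series converges at the endpoint $|z|=1$, diverges like a power, or diverges logarithmically.

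Next I would record the two structural facts that drive every bound. The coefficients $\frac{(b)_n^2}{(t+2)_n\,n!}$ are all nonnegative (they contain $(b)_n^2$), so $I_{c,t}$ is nondecreasing in $|z|^2$; this immediately yields the lower bounds, since the value at $z=0$ is $\frac{\Gamma(1+t)}{\Gamma(2+t)}$. For the upper bound in case (a), where $c<0$, the excess parameter $-c$ is positive, so the series converges at $x=1$ and Gauss's summation theorem gives
\[
{}_2F_1(b,b;t+2;1)=\frac{\Gamma(t+2)\,\Gamma(-c)}{\Gamma\!\bigl(\tfrac{2+t-c}{2}\bigr)^{2}},
\]
which, after multiplying by $\frac{\Gamma(1+t)}{\Gamma(2+t)}$, is exactly the claimed constant; monotonicity makes it the supremum over all $z$.

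For case (b), $c>0$, the series diverges at the endpoint, so I would instead invoke the Euler/connection formula for ${}_2F_1$ near $x=1$ to extract the precise blow-up rate, obtaining $(1-x)^{c}\,{}_2F_1(b,b;t+2;x)\to \frac{\Gamma(t+2)\Gamma(c)}{\Gamma(\frac{2+t+c}{2})^2}$ as $x\to1^-$. Combining this limit with a monotonicity analysis of the weighted function $(1-|z|^2)^c I_{c,t}(z)$ gives the two-sided estimate, and the fact that the endpoint constant dominates the central constant $\frac{\Gamma(1+t)}{\Gamma(2+t)}$ follows from the log-convexity of $\Gamma$, namely $\Gamma(\tfrac{2+t+c}{2})^2\le\Gamma(c)\Gamma(2+t)$. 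Case (c), $c=0$, is the borderline $t+2=2b$, where the logarithmic connection formula ${}_2F_1(b,b;2b;x)=\frac{\Gamma(2b)}{\Gamma(b)^2}\log\frac{1}{1-x}+O(1)$ produces the stated $\log$-type bounds.

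The main obstacle I anticipate is not the reduction to ${}_2F_1$ but the passage from endpoint \emph{asymptotics} to genuine two-sided inequalities valid for \emph{every} $z\in\mathbb{D}$, especially in cases (b) and (c). There the relevant weighted quantity is no longer manifestly monotone, so I would need either a careful convexity/monotonicity study of $(1-x)^c\,{}_2F_1$ and of $\frac{x\,I_{0,t}(x)}{\log(1/(1-x))}$ across the whole interval, or a direct estimation of the integral that captures the sharp constants uniformly, supplemented by the $\Gamma$-function inequalities that guarantee the two bounding constants are correctly ordered.
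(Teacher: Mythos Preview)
The paper does not prove this lemma at all: it is quoted verbatim as \cite[Theorem~1.3]{Liuc} and used as a black box, so there is no ``paper's own proof'' to compare against. Your hypergeometric reduction
\[
I_{c,t}(z)=\frac{\Gamma(1+t)}{\Gamma(2+t)}\,{}_2F_1\!\Bigl(\tfrac{2+t+c}{2},\tfrac{2+t+c}{2};\,t+2;\,|z|^2\Bigr)
\]
is correct and is precisely the approach used in the cited source; the three cases really are governed by the sign of the excess parameter $-c$, and the lower bounds and case~(a) upper bound follow exactly as you wrote.

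The obstacle you flag in case~(b) dissolves cleanly once you apply Euler's transformation rather than the connection formula: with $b=\tfrac{2+t+c}{2}$ and $b'=\tfrac{2+t-c}{2}$ one has
\[
(1-x)^{c}\,{}_2F_1(b,b;t+2;x)={}_2F_1(b',b';t+2;x),
\]
and the right-hand side has nonnegative coefficients $\frac{(b')_n^{2}}{(t+2)_n\,n!}\ge0$ (squares), hence is nondecreasing on $[0,1)$ and, since its excess parameter is now $c>0$, converges at $x=1$ to $\frac{\Gamma(t+2)\Gamma(c)}{\Gamma((2+t+c)/2)^2}$ by Gauss's theorem. This gives the two-sided estimate in~(b) for every $z$ without any separate monotonicity or $\Gamma$-convexity argument. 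Case~(c) is handled the same way after the standard $c=0$ logarithmic expansion of ${}_2F_1(b,b;2b;x)$.
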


\begin{lemma}\label{ly1}
 Let $t_{1}>1$ and
 $$L(w)=\sum_{k=1}^{\infty}\frac{(1-|a_{k}|^{2})^{t_{1}}}{|1-\overline{a_{k}}w|^{t_{2}}}, $$
where $\{a_{k}\}$ is an $r$-lattice on $\mathbb{D}$.
Then
\begin{equation}\label{formula10}
L(w) \leq \left\{
                   \begin{array}{ll}
                     \frac{C_{t_{1},t_{2},r}}{(1-|w|^{2})^{t_{2}-t_{1}}},&t_{2}>t_{1}, \quad w \in  \mathbb{D},\\
                    C_{t_{1},t_{2},r}\log\frac{1}{1-|z|^{2}},  &t_{2}=t_{1}, \quad w \in  \mathbb{D},\\
                    C_{t_{1},t_{2},r}, &t_{2}<t_{1}.\\
                   \end{array}
                 \right.
\end{equation}
where $C_{t_{1},t_{2},r}$ is a positive constant depending on $t_1,~t_2,~r$.
\end{lemma}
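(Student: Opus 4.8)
The plan is to bound the series $L(w)$ by a single integral over $\mathbb{D}$ and then simply read off the three regimes from Lemma~\ref{lemy15}. The key observation is that the $k$-th summand $\frac{(1-|a_k|^{2})^{t_1}}{|1-\overline{a_k}w|^{t_2}}$ is, up to a constant depending only on $r$, controlled by the average of the kernel $\frac{(1-|z|^{2})^{t_1-2}}{|1-\overline{z}w|^{t_2}}$ over the Bergman disk $D(a_k,r)$.

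First I would invoke the standard geometry of an $r$-lattice (see \cite{zhu}): the disks $D(a_k,r)$ cover $\mathbb{D}$ with bounded multiplicity, i.e.\ there is an integer $N=N(r)$ with $\sum_k \mathbf{1}_{D(a_k,r)}\le N$ on $\mathbb{D}$; and for $z\in D(a_k,r)$ one has $1-|z|^{2}\asymp_r 1-|a_k|^{2}$, $|1-\overline{z}w|\asymp_r|1-\overline{a_k}w|$ uniformly in $w\in\mathbb{D}$, and $\int_{D(a_k,r)}dA\asymp_r (1-|a_k|^{2})^{2}$. Combining these facts gives, for each $k$,
$$\frac{(1-|a_k|^{2})^{t_1}}{|1-\overline{a_k}w|^{t_2}}\lesssim_r \int_{D(a_k,r)}\frac{(1-|z|^{2})^{t_1-2}}{|1-\overline{z}w|^{t_2}}\,dA(z),$$
and then summing over $k$ and using the bounded multiplicity of the covering yields
$$L(w)\lesssim_r \int_{\mathbb{D}}\frac{(1-|z|^{2})^{t_1-2}}{|1-\overline{z}w|^{t_2}}\,dA(z).$$

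Next I would set $t=t_1-2$ and $c=t_2-t_1$, so that $2+t+c=t_2$ and the last integral is exactly $I_{c,t}(w)$ in the notation of Lemma~\ref{lemy15}; the hypothesis $t_1>1$ enters precisely here, since it is equivalent to $t>-1$, the condition the lemma requires. If $t_2>t_1$ then $c>0$ and part (b) gives $I_{c,t}(w)\lesssim (1-|w|^{2})^{-(t_2-t_1)}$; if $t_2<t_1$ then $c<0$ and part (a) gives $I_{c,t}(w)\lesssim 1$; if $t_2=t_1$ then $c=0$ and part (c) gives $I_{c,t}(w)\lesssim |w|^{-2}\log\frac{1}{1-|w|^{2}}$, which for $|w|$ bounded away from the origin is $\lesssim \log\frac{1}{1-|w|^{2}}$, while for $|w|$ near $0$ the series $L(w)$ is trivially bounded (for instance by $L(0)=\sum_k(1-|a_k|^{2})^{t_1}<\infty$, which converges precisely because $t_1>1$). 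This produces the three estimates in $(\ref{formula10})$.

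The only genuine work is the sum-to-integral comparison, and it rests entirely on the elementary $r$-lattice facts quoted above; everything after that is matching exponents against Lemma~\ref{lemy15}. I expect the one place needing a word of care is the borderline case $t_2=t_1$ near $w=0$, where the stated bound $C_{t_1,t_2,r}\log\frac{1}{1-|w|^{2}}$ degenerates; there I would simply remark that, since $L$ is bounded on every compact subset of $\mathbb{D}$, the constant $C_{t_1,t_2,r}$ can be taken large enough to absorb this, so the stated inequality holds on all of $\mathbb{D}$.
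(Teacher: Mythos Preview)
Your proposal is correct and follows essentially the same route as the paper: bound each summand by an integral over $D(a_k,r)$ using the comparabilities $1-|z|^2\asymp_r 1-|a_k|^2$, $|1-\overline z w|\asymp_r |1-\overline{a_k}w|$, and $|D(a_k,r)|\asymp_r(1-|a_k|^2)^2$ (the paper cites Propositions~4.4, 4.5 and Lemma~4.30 of \cite{zhu} for these), then sum via bounded overlap (Lemma~4.7 of \cite{zhu}) and apply Lemma~\ref{lemy15} with $t=t_1-2$, $c=t_2-t_1$. In fact you are slightly more careful than the paper in the case $t_2=t_1$: the paper's proof simply says the integral estimate ``yields the desired result,'' whereas you correctly note that part~(c) of Lemma~\ref{lemy15} gives a factor $|w|^{-2}$ that must be absorbed near the origin by the trivial bound $L(w)\le L(0)<\infty$.
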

\begin{proof}
By applying Propositions 4.4 and  4.5 of \cite{zhu}, we have
\begin{equation}\label{formula7}
  |D(a_{k},r)|= \frac{(1-|a_{k}|^{2})^{2}s^{2}}{(1-|a_{k}|^{2}s^{2})^{2}},
\end{equation}
where $s=\tanh r$.
By applying Proposition 4.5 in \cite{zhu} again, we observe that
$$\frac{1-s}{1+s}(1-|v|^2)\leq 1-|a_k|^2\leq \frac{1+s}{1-s}(1-|v|^2),~v\in D(a_k,r).$$
So
\begin{equation}\label{formula8}
  C_{t_{1},r}^{-1}(1-|v|^{2})^{t_{1}-2}\leq(1-|a_{k}|^{2})^{t_{1}-2}\leq C_{t_{1},r}(1-|v|^{2})^{t_{1}-2}, \quad v \in D(a_{k},r),
\end{equation}
 where $$C_{t_{1},r}=\left\{
                   \begin{array}{ll}
                    (\frac{1-s}{1+s})^{t_{1}-2}, \quad & t_{1}<2; \\
                     (\frac{1-s}{1+s})^{2-t_{1}}, \quad & t_{1}\geq 2.
                   \end{array}
                 \right.
 $$

 According to Lemma 4.30 in \cite{zhu}, there exists a positive constant $C_{t_{2},r}$ dependent on $r$ and $t_{2}$, such that for all $w\in\mathbb{D}$,
\begin{equation}\label{formula9}
 |1-\overline{v}w|^{t_{2}} \leq C_{t_{2},r}|1-\overline{a_k}w|^{t_{2}}, \quad v \in D(a_{k},r).
\end{equation}
By combining (\ref{formula7}), (\ref{formula8}) and (\ref{formula9}), we obtain
\begin{equation}\nonumber
  \begin{split}
   \frac{(1-|a_{k}|^{2})^{t_{1}}}{|1-\overline{a_{k}}w|^{t_{2}}}
   &=\frac{(1-|a_{k}|^{2})^{t_{1}-2}(1-s^{2}|a_{k}|^{2})^{2}}{s^{2}|1-\overline{a_{k}}w|^{t_2}}|D(a_{k},r)|\\
   &\leq C_{t_{1},r}C_{t_{2},r} \frac{(1+s^{2})^{2}}{s^{2}} \int_{D(a_{k},r)} \frac{(1-|v|^{2})^{t_{1}-2}}{|1-\overline{v}w|^{t_{2}}}dA(v).
  \end{split}
\end{equation}

By Lemma 4.7 in \cite{zhu} and Lemma \ref{lemy15}, there exists a positive constant $C_{t_{1},t_{2},r}$ dependent on $t_{1}$, $t_{2}$ and $r$, such that
\begin{equation}\nonumber
\begin{split}
\sum_{k=1}^{\infty}\frac{(1-|a_{k}|^{2})^{t_{1}}}{|1-\overline{a_{k}}w|^{t_{2}}}
&\leq C_{t_{1},t_{2},r} \int_{\mathbb{D}} \frac{(1-|v|^{2})^{t_{1}-2}}{|1-\overline{v}w|^{ 2+(t_{1}-2)+(t_{2}-t_{1})}}\, dA(v),\\
\end{split}
\end{equation}
which yields the desired result.
\end{proof}

\begin{lemma} \cite[p. 97]{zhu}\label{lemy9}
For $1<p<\infty$, $\alpha>-1$, $f_{n} \rightarrow f$ weakly in $L^{p}_{a}(dA_{\alpha})$ if and only if $\|f_{n}\|_{p,\alpha}$ is bounded and $f_{n}(z)\rightarrow f(z)$ uniformly on each compact set of $\mathbb{D}$.
\end{lemma}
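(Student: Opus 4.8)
The plan is to prove the two implications separately, leaning on two standard facts about the weighted Bergman space: the reproducing formula $g(z)=\langle g,K_z\rangle$, which holds for every $g\in L^p_a(dA_\alpha)$ because $dA_\alpha$ is a probability measure (so $L^p_a\subset L^1_a$) and each $K_z$ is bounded on $\mathbb{D}$, hence lies in $L^q_a(dA_\alpha)$; and the duality $(L^p_a(dA_\alpha))^{\ast}=L^q_a(dA_\alpha)$ for $1<p<\infty$, realised through the pairing $\langle\cdot,\cdot\rangle$ introduced above.

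For the necessity direction I would argue as follows. If $f_n\to f$ weakly, then $\sup_n\|f_n\|_{p,\alpha}<\infty$ is immediate from the uniform boundedness principle, and, since $g\mapsto g(z)=\langle g,K_z\rangle$ is a bounded functional for each fixed $z$, one gets $f_n(z)\to f(z)$ pointwise on $\mathbb{D}$. The remaining task is to upgrade pointwise convergence to uniform convergence on compacta. For this I would first record the growth bound $|g(z)|\le\|g\|_{p,\alpha}\|K_z\|_{q,\alpha}$, where $\|K_z\|_{q,\alpha}$ is bounded by a constant multiple of $(1-|z|^2)^{-(2+\alpha)/p}$, the estimate on $\|K_z\|_{q,\alpha}$ coming straight from Lemma~\ref{lemy15}. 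This shows $\{f_n\}$ is locally uniformly bounded, hence a normal family by Montel's theorem, and a standard subsequence argument then forces $f_n\to f$ locally uniformly: any subsequence has a further subsequence converging locally uniformly to some holomorphic $h$, and $h=f$ by the pointwise convergence already established.

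For the sufficiency direction, assuming $M:=\sup_n\|f_n\|_{p,\alpha}<\infty$ and $f_n\to f$ uniformly on compacta (so in particular pointwise), Fatou's lemma gives $f\in L^p_a(dA_\alpha)$ with $\|f\|_{p,\alpha}\le M$. To conclude weak convergence I would use the uniform norm bound to reduce, by a $3\varepsilon$-argument, to testing against a dense subset of $L^q_a(dA_\alpha)$; the natural choice is the linear span of the reproducing kernels $\{K_w:w\in\mathbb{D}\}$, whose density follows because any $h\in L^p_a(dA_\alpha)$ with $\langle K_w,h\rangle=\overline{h(w)}=0$ for all $w$ must vanish. On such test functions $g=\sum_j c_j K_{w_j}$ one has $\langle f_n,g\rangle=\sum_j\overline{c_j}\,f_n(w_j)\to\sum_j\overline{c_j}\,f(w_j)=\langle f,g\rangle$, which finishes the proof.

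The step I expect to be the crux is the passage from pointwise to locally uniform convergence in the necessity part; this is the only place where holomorphy genuinely enters, through Montel's theorem and the Bergman growth estimate of Lemma~\ref{lemy15}. Everything else---the boundedness of point evaluations, the identification of the dual, and the density of the span of reproducing kernels---is routine for weighted Bergman spaces and could simply be quoted from \cite{zhu} in a less self-contained account.
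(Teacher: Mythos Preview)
Your proof is correct and follows the standard route; the paper, however, does not prove this lemma at all---it simply cites it from \cite[p.~97]{zhu}. So there is no comparison to make beyond noting that your argument is exactly the kind of self-contained justification one would give for this well-known fact.
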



\begin{lemma} \cite[Theorem 1.1 of Chapter VI]{conway}\label{lemy10}
If $\mathrm{X}$ and $\mathrm{Y}$ are Banach spaces and $T:\mathrm{X} \rightarrow \mathrm{Y}$ is a linear transformation, then the following statements are equivalent.
\begin{itemize}
  \item [(a)] $T $ is bounded;
  \item [(b)] $T^{\ast}(\mathrm{Y}^{\ast})\subseteq \mathrm{X}^{\ast}$;
  \item [(c)] $T:(\mathrm{X},weak)\rightarrow (\mathrm{Y},weak)$ is continuous.
\end{itemize}
\end{lemma}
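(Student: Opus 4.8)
The plan is to prove the three equivalences by running the cycle (a) $\Rightarrow$ (b) $\Rightarrow$ (c) $\Rightarrow$ (a). Throughout I would keep in mind that the adjoint $T^{\ast}$ is defined formally by $(T^{\ast}y^{\ast})(x)=y^{\ast}(Tx)$ for $y^{\ast}\in Y^{\ast}$ and $x\in X$; with this convention, statement (b) asserts exactly that each linear functional $x\mapsto y^{\ast}(Tx)$ is norm-continuous on $X$, i.e. lies in $X^{\ast}$. Note also that the dual of $(X,\mathrm{weak})$ is precisely $X^{\ast}$, which is the fact that links (b) and (c).

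For (a) $\Rightarrow$ (b) I expect a one-line estimate: if $T$ is bounded, then for every $y^{\ast}\in Y^{\ast}$ one has $|y^{\ast}(Tx)|\leq \|y^{\ast}\|\,\|T\|\,\|x\|$, so $T^{\ast}y^{\ast}$ is a bounded linear functional and therefore belongs to $X^{\ast}$. For (b) $\Rightarrow$ (c) I would argue with nets, since the weak topologies are generally non-metrizable. Let $(x_i)$ be a net with $x_i\to x$ weakly, meaning $x^{\ast}(x_i)\to x^{\ast}(x)$ for every $x^{\ast}\in X^{\ast}$. Fixing $y^{\ast}\in Y^{\ast}$ and using (b) to write $T^{\ast}y^{\ast}\in X^{\ast}$, I get
$$
y^{\ast}(Tx_i)=(T^{\ast}y^{\ast})(x_i)\longrightarrow (T^{\ast}y^{\ast})(x)=y^{\ast}(Tx).
$$
Since $y^{\ast}$ is arbitrary, $Tx_i\to Tx$ weakly, which is precisely weak-to-weak continuity of $T$.

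The substantive direction is (c) $\Rightarrow$ (a), and I would derive it from the closed graph theorem, which is available because $X$ and $Y$ are Banach. Suppose $x_n\to x$ and $Tx_n\to y$ in norm. Norm convergence implies weak convergence, so $x_n\to x$ weakly; by (c) this forces $Tx_n\to Tx$ weakly. On the other hand $Tx_n\to y$ in norm, hence weakly. Because $Y^{\ast}$ separates the points of $Y$ (Hahn--Banach), the weak topology on $Y$ is Hausdorff, so weak limits are unique and $Tx=y$. Thus the graph of $T$ is norm-closed, and the closed graph theorem yields that $T$ is bounded.

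The main obstacle is concentrated entirely in the last implication: the other two directions are essentially formal manipulations with the definition of $T^{\ast}$ and the weak topology, whereas (c) $\Rightarrow$ (a) genuinely uses the completeness of both spaces (through the closed graph theorem) together with the Hausdorff property of the weak topology to identify the candidate limit. I would therefore present (a) $\Rightarrow$ (b) and (b) $\Rightarrow$ (c) briefly and devote the care to the closed-graph argument; no estimates specific to Bergman spaces are needed, as this is a general Banach-space statement.
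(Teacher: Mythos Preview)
The paper does not supply its own proof of this lemma; it is quoted verbatim from Conway \cite[Theorem~1.1 of Chapter~VI]{conway} and used as a black box. Your argument is correct and is essentially the standard proof one finds in Conway: the implications (a)~$\Rightarrow$~(b) and (b)~$\Rightarrow$~(c) are formal, and (c)~$\Rightarrow$~(a) is obtained from the closed graph theorem together with the Hausdorff property of the weak topology.
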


The first part of the following lemma can be obtained from the proof of Theorem 4.33 of \cite{zhu}. For the completeness of the paper, we give a proof here.

\begin{lemma}\label{lem2}
Suppose that $1< p <\infty$, $\alpha>-1$, and $\sigma>0$ such that for any $r$-lattice $\{a_{k}\}$ in the Bergman metric, where $0<r<\sigma$. Then we can represent $f \in L^{p}_{a}(dA_{\alpha})$ as
$$f(z)=\sum_{k=1}^{\infty} c_{k} \frac{(1-|a_{k}|^{2})^{\frac{(p-1)(2+\alpha)}{p}}}{(1-\overline{a_{k}}z)^{2+\alpha}},$$
in such a way that the series converges in the norm topology of $L^{p}_{a}(dA_{\alpha})$ and
$$\sum_{k=1}^{\infty} |c_{k}|^{p} \leq C\|f\|_{p,\alpha}^{p}$$
for some positive constant $C$ independent of $f$. Furthermore, if $\{f_{n}\} \rightarrow 0$ weakly in $L^{p}_{a}(dA_{\alpha})$, then
\begin{equation}\label{f2}
  \sum_{|a_{k}|\leq R}|c_{k,n}|^{p} \rightarrow 0~~ \text{as}~~ n \rightarrow \infty
\end{equation}
for any fixed $R<1$, where $c_{k,n}$ are coefficients of
$$f_{n}(z)=\sum_{k=1}^{\infty} c_{k,n} \frac{(1-|a_{k}|^{2})^{\frac{(p-1)(2+\alpha)}{p}}}{(1-\overline{a_{k}}z)^{2+\alpha}}.$$
\end{lemma}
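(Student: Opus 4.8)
The plan is to reproduce the classical atomic decomposition argument behind Theorem~4.33 of \cite{zhu}, but to carry it out so that the coefficients depend \emph{linearly} on $f$ — the bare statement of Lemma~\ref{lemy2}, applied with exponent $b=2+\alpha$ (the hypothesis $b>\max\{1,1/p\}+\frac{\alpha+1}{p}$ does hold here since $(\alpha+1)(1-1/p)>0$), only gives the \emph{existence} of such coefficients, which is not enough for the second assertion. Fix an $r$-lattice $\{a_k\}$ with $0<r<\sigma$ and a measurable partition $\mathbb{D}=\bigsqcup_k D_k$ with $D(a_k,r/4)\subseteq D_k\subseteq D(a_k,r)$. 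Starting from the reproducing identity $f(z)=\int_{\mathbb{D}}f(w)(1-z\overline w)^{-(2+\alpha)}\,dA_\alpha(w)=\sum_k\int_{D_k}f(w)(1-z\overline w)^{-(2+\alpha)}\,dA_\alpha(w)$, I would introduce the discretization operator
$$T_rf(z)=\sum_k f(a_k)\,(\alpha+1)(1-|a_k|^2)^{\alpha}\,|D_k|\,(1-z\overline{a_k})^{-(2+\alpha)},$$
and recall from \cite{zhu} that (i) $T_r$ is bounded on $L^p_a(dA_\alpha)$ with a norm bound independent of $r$ (a Schur-test estimate using $|D_k|\asymp(1-|a_k|^2)^2$ and Lemma~\ref{lemy15}), and (ii) $\|I-T_r\|\to0$ as $r\to0$, because on each $D_k$ the kernel $(1-z\overline w)^{-(2+\alpha)}(1-|w|^2)^{\alpha}$ varies by a factor $1+o(1)$ as $r\to0$. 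Shrinking $\sigma$ if necessary, $T_r=I-(I-T_r)$ is therefore invertible for $0<r<\sigma$. Setting $g=T_r^{-1}f$ and
$$c_k=c_k(f):=g(a_k)\,(\alpha+1)\,\frac{|D_k|}{(1-|a_k|^2)^{2}}\,(1-|a_k|^2)^{\frac{2+\alpha}{p}},$$
and using the identity $2+\alpha-\frac{(p-1)(2+\alpha)}{p}=\frac{2+\alpha}{p}$, the equality $f=T_rg$ becomes exactly $f(z)=\sum_k c_k(1-|a_k|^2)^{\frac{(p-1)(2+\alpha)}{p}}(1-\overline{a_k}z)^{-(2+\alpha)}$, with convergence in the norm of $L^p_a(dA_\alpha)$.

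For the coefficient bound I would use $|D_k|\asymp(1-|a_k|^2)^2$ to get $|c_k|^p\asymp|g(a_k)|^p(1-|a_k|^2)^{2+\alpha}$, then the sub-mean-value inequality for the subharmonic function $|g|^p$, namely $|g(a_k)|^p(1-|a_k|^2)^{2+\alpha}\lesssim\int_{D(a_k,r)}|g|^p\,dA_\alpha$ (valid because $(1-|w|^2)^\alpha\asymp(1-|a_k|^2)^\alpha$ on $D(a_k,r)$), and finally the bounded-overlap property of $\{D(a_k,r)\}$ to obtain
$$\sum_k|c_k|^p\lesssim\sum_k\int_{D(a_k,r)}|g|^p\,dA_\alpha\lesssim\|g\|_{p,\alpha}^p\le\|T_r^{-1}\|^p\,\|f\|_{p,\alpha}^p,$$
which is the first assertion, with $c_k=c_k(f)$ linear and bounded in $f$.

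For the second assertion, suppose $f_n\to0$ weakly in $L^p_a(dA_\alpha)$ and write $g_n=T_r^{-1}f_n$, so that $c_{k,n}=c_k(f_n)$ is the expression above with $g$ replaced by $g_n$. Since $T_r^{-1}$ is a bounded operator, Lemma~\ref{lemy10} shows it is continuous for the weak topologies, hence $g_n\to0$ weakly; then Lemma~\ref{lemy9} gives $\sup_n\|g_n\|_{p,\alpha}<\infty$ and $g_n\to0$ uniformly on every compact subset of $\mathbb{D}$. Fix $R<1$: because the $a_k$ are separated in the Bergman metric, only finitely many of them lie in the compact set $\overline{D(0,R)}$, and for each such index $g_n(a_k)\to0$; since $(1-|a_k|^2)^{2+\alpha}\le1$ and $|D_k|(1-|a_k|^2)^{-2}$ is bounded, this yields $|c_{k,n}|^p\lesssim|g_n(a_k)|^p\to0$, whence $\sum_{|a_k|\le R}|c_{k,n}|^p\to0$ as $n\to\infty$, i.e.\ \eqref{f2}.

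The technical heart — the only place where a genuine estimate is hidden — is point (ii) above, the smallness of $\|I-T_r\|$ for small $r$; this is exactly the content of the proof of Theorem~4.33 in \cite{zhu}, and I would simply adapt it to weight $\alpha$. A secondary point to handle with care is that \eqref{f2} must refer to the \emph{specific} coefficients produced by this construction (through $T_r^{-1}$), not to an arbitrary admissible choice — it is precisely the linearity of $f\mapsto c_k(f)$ that makes the weak–weak continuity of $T_r^{-1}$ applicable.
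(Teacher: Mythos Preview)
Your proposal is correct and follows essentially the same route as the paper: both introduce the discretized reproducing operator $T$ (the paper uses the weights $A_{\alpha}(D_k)$ in place of your $(\alpha+1)(1-|a_k|^2)^{\alpha}|D_k|$, but the two are comparable on $D_k$ and either version gives $\|I-T\|\to0$), invert it to obtain \emph{linear} coefficients $c_k(f)$ via $g=T^{-1}f$, bound $\sum_k|c_k|^p$ by the sub-mean-value/bounded-overlap argument (the paper cites Proposition~4.18 of \cite{zhu}, which packages the same estimate), and then use Lemmas~\ref{lemy9}--\ref{lemy10} to pass weak convergence of $f_n$ through $T^{-1}$. The only cosmetic difference in the second assertion is that the paper observes $\bigcup_{|a_k|\le R}D_k$ is contained in a fixed compact disk and bounds the partial sum by an integral of $|g_n|^p$ over that disk, whereas you use directly that only finitely many $a_k$ lie in $\overline{D(0,R)}$; both conclude \eqref{f2} from the uniform-on-compacta convergence $g_n\to0$.
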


\begin{proof}
According to Lemma \ref{lemy2}, every $f \in L^{p}_{a}(dA_{\alpha})$ can be expressed as
\begin{equation}\label{1}
  f(z)=\sum_{k=1}^{\infty} c_{k} \frac{(1-|a_{k}|^{2})^{\frac{(p-1)(2+\alpha)}{p}}}{(1-\overline{a_{k}}z)^{2+\alpha}},
\end{equation}
where $\{c_k\}\in l^p$, $\{a_k\}$ is an $r$-lattice in the Bergman metric and the series in (\ref{1}) converges in the norm of $L^{p}_{a}(dA_{\alpha})$.
Define an operator $T$ as follows:
$$Tf(z)=\sum_{k=1}^{\infty}\frac{A_{\alpha}(D_{k})f(a_{k})}{(1-\overline{a_{k}}z)^{2+\alpha}},~~f \in L^{p}_{a}(dA_{\alpha}),$$
where $\{D_k\}$ is a decomposition of $\mathbb{D}$ according to Lemma 4.10 in \cite{zhu}.
From the proof of Lemma \ref{lemy2}, it follows that $T$ is bounded and invertible. Let $g=T^{-1}f$, then
\begin{equation}\nonumber
  c_{k}=\frac{A_{\alpha}(D_{k})g(a_{k})}{(1-|a_{k}|^{2})^{\frac{(p-1)(2+\alpha)}{p}}}.
\end{equation}
By Proposition 4.5 in \cite{zhu}, it is obvious that $$A_{\alpha}(D(z,r))\sim (1-|z|^{2})^{2+\alpha}$$
for any $\alpha $ and $r>0$. Thus, there exists a positive constant $C_{r,\alpha}$ dependent only on $r$ and $\alpha$ such that
\begin{equation}\nonumber
  \begin{split}
    A_{\alpha}(D_{k})
    &\leq C_{r,\alpha} (1-|a_{k}|^{2})^{2+\alpha}.\\
  \end{split}
\end{equation}
Thus, by \cite[Proposition 4.18]{zhu},
\begin{equation}\label{formula18}
  \begin{split}
    \sum_{k=1 }^{\infty} |c_{k}|^{p}
    &\leq C_{r,\alpha}^{p-1}\sum_{k=1 }^{\infty} A_{\alpha}(D_{k})|g(a_{k})|^{p}\\
   &\leq C_{r,\alpha}^{p-1}M \int_{\mathbb{C}}|g(w)|^{p}dA_{\alpha}(w)\\
   & \leq  C_{r,\alpha}^{p-1}M \|T^{-1}f\|_{p,\alpha}^{p}\\
   & \leq C\|f\|_{p,\alpha}^{p},
  \end{split}
\end{equation}
where $M$ is the maximal number of this $r$-lattice (see Lemma 4.7 in \cite{zhu}).

Next, we show that $(\ref{f2})$ holds. For any fixed $R\leq 1$, we note that the hyperbolic disk $D(a_{k},r)$ is a Euclidean disk with center and radius given by
$$c_{0}=\frac{1-s^{2}}{1-s^{2}|a_{k}|^{2}}a_{k}, \quad r_{0}=\frac{1-|a_{k}|^{2}}{1-s^{2}|a_{k}|^{2}}s,$$
where $s=\tanh r\in (0,1)$.
Since
\begin{equation}\nonumber
\begin{split}
   |c_{0}|+r_{0}
   =\frac{s+|a_{k}|}{1+s|a_{k}|},
\end{split}
\end{equation}
and the function
$$d(x)=\frac{s+x}{1+sx}$$
is in $x$ for $s\in(0,1)$, it follows that
$$\frac{s+|a_{k}|}{1+s|a_{k}|}\leq \frac{s+R}{1+sR}<1,~\text{when}~0\leq |a_{k}|\leq R<1.$$
Moreover, we have
 $$\bigcup_{|a_{k}|\leq R}D_{k} \subset \bigcup_{|a_{k}|\leq R}D(a_{k},r)\subset \overline{B\big(0,(s+R)/(1+sR)\big)},$$
where $B\big(0,(s+R)/(1+sR)\big)$ is a Euclidean disk, and its closure is a compact set in $\mathbb{D}$.
Since $f_{n} \rightarrow 0$ weakly, by Lemma \ref{lemy9}, $\{f_{n}\}$ is bounded and $f_{n}(z)\rightarrow 0$ uniformly on each compact subset of $\mathbb{D}$. According to Lemma \ref{lemy2}, we have
$$f_{n}(w)=\sum_{k=1}^{\infty} c_{k,n} \frac{(1-|a_{k}|^{2})^{\frac{(p-1)(2+\alpha)}{p}}}{(1-\overline{a_{k}}w)^{2+\alpha}}.$$

 Let $g_{n}=T^{-1}f_{n}$. Lemma \ref{lemy9} and Lemma \ref{lemy10} show that $g_{n}(w) \rightarrow 0$ uniformly on $\overline{B\big(0,(s+R)/(1+sR)\big)}$.
 In other words, for any $\varepsilon>0$, there exists a positive constant $N_{\varepsilon, R}$ such that
\begin{equation}\label{formula26}
  |g_{n}(w)| <\varepsilon,
\end{equation}
for $n>N_{\varepsilon, R}$ and $w \in \overline{B\big(0,(s+R)/(1+sR)\big)}$.
From (\ref{formula18}), we get
\begin{equation}\nonumber
  \begin{split}
      \sum_{|a_{k}|\leq R} |c_{k,n}|^{p}
    &\leq C_{r,\alpha}^{p-1}M \int_{\bigcup_{|a_{k}|\leq R}D_{k}}|g_{n}(w)|^{p}dA_{\alpha}(w)\\
    & \leq C_{r,\alpha}^{p-1}M \int_{\overline{B(0,\frac{s+R}{1+sR})}}|g_{n}(w)|^{p}dA_{\alpha}(w)\\
    & \leq  C_{r,\alpha}^{p-1} M\varepsilon^{p}.
  \end{split}
\end{equation}
We get the desired result.
\end{proof}

\begin{lemma}\label{lemy16}
Suppose $m>0$ and $\alpha>-1$.
If $\|S_{z}1\|_{m,\alpha}\leq C$ for some positive constants $C$ and all $z \in \mathbb{D}$.
Then for every $w \in \mathbb{D}$,
\begin{equation}\label{formula3}
  \begin{split}
   |SK_{z}(w)|\leq \|S_{z}1\|_{m,\alpha} \frac{(1-|z|^{2})^{-\frac{2+\alpha}{m}}(1-|w|^{2})^{-\frac{2+\alpha}{m}}}{|1-\overline{z}w|^{(1-\frac{2}{m})(2+\alpha)}}.
  \end{split}
\end{equation}
\end{lemma}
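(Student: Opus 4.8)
The plan is to first express $SK_z$ entirely in terms of $S_z1$, and then feed the hypothesis $\|S_z1\|_{m,\alpha}\le C$ into a pointwise growth estimate for Bergman functions. The crucial algebraic fact is that $U_z$ is an involution: since $\varphi_z\circ\varphi_z=\mathrm{id}$ and $1-\overline{z}\varphi_z(w)=(1-|z|^2)/(1-\overline{z}w)$, a one-line computation gives $k_z(\varphi_z(w))\,k_z(w)=1$, hence $U_zU_z=I$ pointwise (wherever both sides make sense). Because $U_z1=k_z$ and $S_z=U_zSU_z$, this yields $S_z1=U_z(Sk_z)$, i.e.\ $(S_z1)(w)=(Sk_z)(\varphi_z(w))\,k_z(w)$, and applying $U_z$ once more gives $Sk_z=U_z(S_z1)$. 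Writing out $k_z(w)=(1-|z|^2)^{(2+\alpha)/2}/(1-\overline{z}w)^{2+\alpha}$ and dividing by $(1-|z|^2)^{(2+\alpha)/2}$ to pass from $k_z$ to $K_z$, this collapses to the clean identity
$$ (SK_z)(w)=\frac{(S_z1)(\varphi_z(w))}{(1-\overline{z}w)^{2+\alpha}}. $$
Here one uses that the domain of $S$ contains all reproducing kernels, so $Sk_z\in L^p_a(dA_\alpha)$ is a genuine analytic function; consequently $S_z1=U_z(Sk_z)$ is analytic, and by hypothesis it lies in $L^m_a(dA_\alpha)$.

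Next I would invoke the standard pointwise estimate for Bergman spaces: for analytic $g\in L^m_a(dA_\alpha)$ with $m>0$, $\alpha>-1$ and any $u\in\mathbb{D}$,
$$ |g(u)|\le \frac{\|g\|_{m,\alpha}}{(1-|u|^2)^{(2+\alpha)/m}}. $$
If one does not wish to quote this from \cite{zhu}, it follows quickly: $|g|^m$ is subharmonic, so integrating the sub-mean-value inequality for $|g(0)|^m$ against $(\alpha+1)(1-\rho^2)^{\alpha}2\rho\,d\rho$ over $\rho\in[0,1]$ gives $|g(0)|^m\le\|g\|_{m,\alpha}^m$; the general point $u$ reduces to $0$ by replacing $g$ with $g(\varphi_u(\cdot))\,(1-|u|^2)^{(2+\alpha)/m}(1-\overline{u}\,\cdot)^{-2(2+\alpha)/m}$, whose $L^m_a(dA_\alpha)$-norm equals $\|g\|_{m,\alpha}$ by the Möbius change of variables $dA_\alpha(\varphi_u(w))=(1-|u|^2)^{2+\alpha}|1-\overline{u}w|^{-2(2+\alpha)}\,dA_\alpha(w)$.

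Finally, apply this estimate to $g=S_z1$ at the point $u=\varphi_z(w)$, using the identity $1-|\varphi_z(w)|^2=(1-|z|^2)(1-|w|^2)/|1-\overline{z}w|^2$. This gives
$$ |(S_z1)(\varphi_z(w))|\le \|S_z1\|_{m,\alpha}\,\frac{|1-\overline{z}w|^{2(2+\alpha)/m}}{(1-|z|^2)^{(2+\alpha)/m}(1-|w|^2)^{(2+\alpha)/m}}, $$
and dividing by $|1-\overline{z}w|^{2+\alpha}$ as in the first displayed identity produces the factor $|1-\overline{z}w|^{2(2+\alpha)/m-(2+\alpha)}=|1-\overline{z}w|^{-(1-2/m)(2+\alpha)}$, which is exactly the asserted bound \eqref{formula3}. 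I expect the only mildly delicate point to be the first paragraph: one must be sure the manipulations $S_z1=U_zSk_z$ and $Sk_z=U_z(S_z1)$ are legitimate even though $S$ is a priori only defined on reproducing kernels (and not yet known to be bounded). This is fine because each step is an identity between honest analytic functions, with $U_z$ acting as an explicit pointwise, involutive substitution operator; everything after that is the routine exponent bookkeeping indicated above.
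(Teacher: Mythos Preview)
Your proof is correct and follows essentially the same route as the paper: both express $SK_z$ in terms of $S_z1$ via the involutive property $k_z(\varphi_z(w))k_z(w)=1$, apply the pointwise Bergman estimate (Theorem~4.14 in \cite{zhu}), and then use $1-|\varphi_z(w)|^2=(1-|z|^2)(1-|w|^2)/|1-\overline z w|^2$ to unpack the exponents. The only cosmetic difference is ordering---the paper applies the pointwise bound to $S_z1$ at $w$ and then substitutes $w\mapsto\varphi_z(w)$, whereas you first isolate the clean identity $(SK_z)(w)=(S_z1)(\varphi_z(w))/(1-\overline z w)^{2+\alpha}$ and evaluate the bound at $\varphi_z(w)$ directly.
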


\begin{proof}
We note that $S_{z}1(w)=(Sk_{z})(\varphi_{z}(w))  k_{z}(w)$,
so by Theorem 4.14 in \cite{zhu}, we get
\begin{equation}\nonumber
  |(Sk_{z})(\varphi_{z}(w))k_{z}(w)| \leq \frac{\|S_{z}1\|_{m,\alpha}}{(1-|w|^{2})^{\frac{2+\alpha}{m}}}.
\end{equation}
By replacing $w$ with $\varphi_{z}(w)$, along with $\varphi_{z}(\varphi_{z}(w))=w$ and $k_{z}(\varphi_{z}(w))k_{z}(w)=1$, we obtain
\begin{equation*}
  |Sk_{z}(w)|
    \leq \frac{\|S_{z}1\|_{m,\alpha}}{(1-|\varphi_{z}(w)|^{2})^{\frac{2+\alpha}{m}}}|k_{z}(w)|.
\end{equation*}
A simple computation shows that
\begin{equation*}
  |SK_{z}(w)|\leq \|S_{z}1\|_{m,\alpha} \frac{(1-|z|^{2})^{-\frac{2+\alpha}{m}}(1-|w|^{2})^{-\frac{2+\alpha}{m}}}{|1-\overline{z}w|^{(1-\frac{2}{m})(2+\alpha)}}.
\end{equation*}
\end{proof}

\begin{theorem}\label{the1}
Suppose that $1<p<\infty$, $\alpha>-1$ and $S$ is a linear operator on $L^{p}_{a}(dA_{\alpha})$. If there exist a positive constant $C$ and some 
$$
m>p\frac{2+\alpha}{1+\alpha}\max\Big\{1,\frac{1}{p-1}\Big\},
$$
such that $\|S_{z}1\|_{m,\alpha}\leq C$ for all $z \in \mathbb{D}$, then $S$ is bounded.
\end{theorem}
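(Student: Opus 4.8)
The plan is to use the atomic decomposition of Lemma~\ref{lem2} to reduce boundedness of $S$ to a kernel estimate, and then combine the pointwise bound on $SK_z$ from Lemma~\ref{lemy16} with Schur-type integral estimates controlled by Lemma~\ref{ly1} and Lemma~\ref{lemy15}. Concretely, take $f\in L^p_a(dA_\alpha)$ and write, via Lemma~\ref{lem2},
$$f(z)=\sum_{k=1}^{\infty} c_k\,\frac{(1-|a_k|^2)^{\frac{(p-1)(2+\alpha)}{p}}}{(1-\overline{a_k}z)^{2+\alpha}},\qquad \sum_k|c_k|^p\le C\|f\|_{p,\alpha}^p,$$
where $\{a_k\}$ is an $r$-lattice with $r$ small. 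Since the series converges in norm and $S$ is linear and (at least) defined on reproducing kernels, one expects $Sf=\sum_k c_k (1-|a_k|^2)^{\frac{(p-1)(2+\alpha)}{p}} SK_{a_k}$; making this interchange rigorous (the series converges in $L^p_a$, but we do not yet know $S$ is bounded) is something to be handled by first proving the estimate for finite sums and passing to the limit, or by observing that the estimate below shows the partial sums of $Sf$ form a Cauchy sequence pointwise and in $L^p$, which then must equal $Sf$ by density arguments on kernels.

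Next I would insert the pointwise bound from Lemma~\ref{lemy16}:
$$|SK_{a_k}(w)|\le C\,\frac{(1-|a_k|^2)^{-\frac{2+\alpha}{m}}(1-|w|^2)^{-\frac{2+\alpha}{m}}}{|1-\overline{a_k}w|^{(1-\frac{2}{m})(2+\alpha)}}.$$
Plugging this in gives
$$|Sf(w)|\le C(1-|w|^2)^{-\frac{2+\alpha}{m}}\sum_k |c_k|\,\frac{(1-|a_k|^2)^{\frac{(p-1)(2+\alpha)}{p}-\frac{2+\alpha}{m}}}{|1-\overline{a_k}w|^{(1-\frac{2}{m})(2+\alpha)}}.$$
Now I would split into the two regimes of $\max\{1,\frac1{p-1}\}$. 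In the case $p\ge 2$ (so the max is $1$ and $m>p\frac{2+\alpha}{1+\alpha}$), apply Hölder in $k$ with exponents $p$ and $q$ to separate the $|c_k|$ factor, leaving $\left(\sum_k|c_k|^p\right)^{1/p}$ times a $q$-power sum of the form $\sum_k (1-|a_k|^2)^{t_1}/|1-\overline{a_k}w|^{t_2}$, which Lemma~\ref{ly1} bounds; then integrate $|Sf(w)|^p\,dA_\alpha(w)$ and apply Lemma~\ref{lemy15} to the resulting single integral in $w$. In the case $1<p<2$ (the max is $\frac1{p-1}$, $m>\frac{p(2+\alpha)}{(p-1)(1+\alpha)}$), it is cleaner to use the elementary inequality $\left(\sum_k b_k\right)^p\le \sum_k b_k^p$ only after an extra Hölder split, or alternatively to estimate $\sum_k|c_k|\,b_k(w)\le \left(\sum_k|c_k|^p b_k(w)\right)^{1/p}\left(\sum_k b_k(w)\right)^{1/q}$ with $b_k(w)$ the $k$-th kernel term, bound the second factor by Lemma~\ref{ly1}, raise to the $p$, interchange sum and integral by Tonelli, and finish with Lemma~\ref{lemy15} applied termwise. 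The role of the lattice separation is that Lemma~\ref{ly1} (hence also Lemma~\ref{lemy15} after summation) requires the exponents to land in the convergent ranges; this is exactly where the hypothesis $m>p\frac{2+\alpha}{1+\alpha}\max\{1,\frac{1}{p-1}\}$ enters, guaranteeing $t_2<t_1$ (or $\le$) so that the lattice sum is bounded uniformly in $w$, and simultaneously that the leftover $w$-integral has a negative or zero exponent $c$ in Lemma~\ref{lemy15}.

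The bookkeeping of exponents is the crux. Writing $a=\frac{2+\alpha}{m}$ and $\beta=2+\alpha$, the $k$-sum has numerator exponent $t_1=q\big(\frac{(p-1)\beta}{p}-a\big)=\beta-qa$ (using $q(p-1)/p=1$) and denominator exponent $t_2=q(1-\tfrac2m)\beta=q\beta-2qa$; one checks $t_1-t_2=2qa-(q-1)\beta=q(2a-\beta/p)$, which is positive precisely when $a>\beta/(2p)$... the actual inequality forced by the hypothesis is the slightly different one ensuring Lemma~\ref{ly1} applies with a bounded right-hand side \emph{and} that the subsequent integral $\int_{\mathbb D}(1-|w|^2)^{\alpha-pa}/|1-\overline{a_k}w|^{\,?}\,dA(w)$ converges with the right power of $(1-|a_k|^2)$ coming back out; I would carry out this arithmetic carefully, treating the $p\ge2$ and $p<2$ cases separately since the Hölder split distributes the exponent $a$ differently, and verify in each case that the threshold on $m$ is exactly $p\frac{2+\alpha}{1+\alpha}\max\{1,\frac1{p-1}\}$. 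I expect the main obstacle to be precisely this exponent chase — ensuring the lattice sum is \emph{bounded} (not merely logarithmically growing, which would be the borderline $t_1=t_2$ case and would lose the argument) and that after integrating the powers of $(1-|a_k|^2)$ reassemble to give back $\sum_k|c_k|^p$ — rather than any conceptual difficulty, since all the analytic tools (atomic decomposition, pointwise kernel bound, Forelli–Rudin/Liu-type integral estimates) are already in hand.
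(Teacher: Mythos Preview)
Your overall architecture matches the paper exactly: atomic decomposition (Lemma~\ref{lem2}), the pointwise bound on $SK_{a_k}$ from Lemma~\ref{lemy16}, a H\"older split of the resulting $k$-sum, the lattice estimate of Lemma~\ref{ly1}, and finally Lemma~\ref{lemy15} for the $w$-integral. The gap is in the specific H\"older split, and it is not just bookkeeping.

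For your $p\ge 2$ route (pull all of the kernel to the $q$-side and keep only $|c_k|^p$ on the $p$-side) the arithmetic actually collapses. With $\beta=2+\alpha$ and $a=\beta/m$ one gets $t_1=\beta-qa$, $t_2=q\beta-2qa$, so $t_2-t_1=q(\beta/p-a)>0$; the lattice sum is \emph{not} uniformly bounded but behaves like $(1-|w|^2)^{-(t_2-t_1)}$. After raising to the power $p/q$ and multiplying by $(1-|w|^2)^{-pa}$ the total exponent on $(1-|w|^2)$ is exactly $-\beta$, and the integral $\int_{\mathbb D}(1-|w|^2)^{-\beta}\,dA_\alpha(w)=\int_{\mathbb D}(1-|w|^2)^{-2}\,dA(w)$ diverges. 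So the ``separate $|c_k|$ completely'' scheme fails for every admissible $m$. Your alternative split for $1<p<2$, namely $\sum_k|c_k|b_k\le(\sum_k|c_k|^p b_k)^{1/p}(\sum_k b_k)^{1/q}$, runs into a different obstruction: the second factor has $t_1=(p-1)\beta/p-a$, and for small $p$ (e.g.\ $\alpha=0$, $p<2$) one has $t_1<1$ no matter how large $m$ is, so Lemma~\ref{ly1} does not apply.

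The paper fixes both problems with a single device that works uniformly for all $1<p<\infty$: it inserts two free parameters $n_1,n_2$ into the H\"older split, writing the $k$-th term as
\[
\Big(|c_k|\,\|S_{a_k}1\|_{m,\alpha}\tfrac{(1-|a_k|^2)^{(\frac{p-1}{p}-\frac1m)\beta-n_1}(1-|w|^2)^{-\beta/m}}{|1-\overline{a_k}w|^{(1-\frac2m)\beta-n_2}}\Big)\cdot\Big(\tfrac{(1-|a_k|^2)^{n_1}}{|1-\overline{a_k}w|^{n_2}}\Big),
\]
and then applying H\"older. One chooses $\frac{p-1}{p}<n_1<(\frac{p-1}{p}-\frac1m)\beta$ (this is where the hypothesis on $m$ enters, making the interval nonempty) and $n_1<n_2<n_1+\frac{1+\alpha}{p}-\frac{\beta}{m}$. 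The first inequality forces $qn_1>1$ so Lemma~\ref{ly1} applies; the second gives $L(w)\le C(1-|w|^2)^{-q(n_2-n_1)}$ with $n_2-n_1$ \emph{strictly smaller} than in your naive split, so that the remaining $w$-integral has exponent $(-\frac{\beta}{m}-n_2+n_1)p+\alpha>-1$ and converges via Lemma~\ref{lemy15}; crucially, the resulting power of $(1-|a_k|^2)$ coming out of that integral cancels exactly against the leftover power in the $p$-side sum, leaving $\sum_k|c_k|^p\|S_{a_k}1\|_{m,\alpha}^p$. In short, the lattice sum is \emph{allowed} to blow up mildly; the trick is to tune how much of the kernel stays on the $p$-side so that the blow-up is integrable and the $a_k$-powers rebalance. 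Your expectation that one should arrange $t_2\le t_1$ to get a bounded lattice sum is the wrong target, and no case split on $p\gtrless 2$ is needed.
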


\begin{proof}
From Lemma \ref{lemy2}, for $f\in L_{a}^{p}(dA_{\alpha})$ we have
\begin{equation}\label{formula4}
  f(w)=\sum_{k=1}^{\infty}c_{k}f_{k}(w),
\end{equation}
where $\{c_k\}\in l^p$,
\begin{eqnarray*}
  f_{k}(w)
    = (1-|a_{k}|^{2})^{\frac{(p-1)(2+\alpha)}{p}}K_{a_{k}}(w),
\end{eqnarray*}
and $\{a_{k}\} $ is an $r$-lattice in the Bergman metric.

By Lemma \ref{lemy16}, we have
\begin{equation}\label{formula25}
  \begin{split}
    |Sf_{k}(w)|
    &= |(1-|a_{k}|^{2})^{\frac{(p-1)(2+\alpha)}{p}}SK_{a_{k}}(w)|\\
    &  \leq \|S_{a_k}1\|_{m,\alpha}
    \frac{(1-|a_k|^{2})^{(\frac{p-1}{p}-\frac{1}{m})(2+\alpha)}(1-|w|^{2})^{-\frac{2+\alpha}{m}}}
    {|1-\overline{a_k}w|^{(1-\frac{2}{m})(2+\alpha)}}.\\
  \end{split}
\end{equation}
Applying (\ref{formula4}), (\ref{formula25}),  and H\"older's inequality, we get

\begin{equation}\label{formula6}
  \begin{split}
    \int_{\mathbb{D}}
    &|Sf(w)|^{p}
    dA_{\alpha}(w)\\
    & \leq \int_{\mathbb{D}}\bigg(\sum_{k=1}^{\infty}
    \Big|c_{k} \|S_{a_{k}}1\|_{m,\alpha} \frac{(1-|a_{k}|^{2})^{(\frac{p-1}{p}-\frac{1}{m})(2+\alpha)}(1-|w|^{2})^{-\frac{2+\alpha}{m}}}
    {|1-\overline{a_{k}}w|^{(1-\frac{2}{m})(2+\alpha)}}\Big|\bigg)^{p}\, dA_{\alpha}(w)\\
    &\leq \int_{\mathbb{D}}\bigg(\sum_{k=1}^{\infty}
    \Big|c_{k} \|S_{a_{k}}1\|_{m,\alpha} \frac{(1-|a_{k}|^{2})^{(\frac{p-1}{p}-\frac{1}{m})(2+\alpha)-n_{1}}(1-|w|^{2})^{-\frac{2+\alpha}{m}}}
    {|1-\overline{a_{k}}w|^{(1-\frac{2}{m})(2+\alpha)-n_{2}}}\Big|^{p}\bigg)\\
    &\quad\times \bigg(\sum_{k=1}^{\infty} \frac{(1-|a_{k}|^{2})^{qn_{1}} }{|1-\overline{a_{k}}w|^{qn_{2}}}\bigg)^{\frac{p}{q}}\, dA_{\alpha}(w),\\
  \end{split}
\end{equation}
where $0<n_1,n_2<\infty$ and $q=p/(p-1)$.

To get the desired result, we will prove that the last term in (\ref{formula6}) can be controlled by $\sum_{k=1}^{\infty}|c_{k}|^{p} \|S_{a_{k}}1\|_{m,\alpha}^{p}$.
Next, the assumption gives
\begin{equation}\nonumber
  \frac{p-1}{p}< \Big(\frac{p-1}{p}-\frac{1}{m}\Big)(2+\alpha),~~ \frac{2+\alpha}{m}<\frac{1+\alpha}{p}.
\end{equation}
Choose positive numbers $n_1$ and $n_2$ satisfying
 $$\frac{p-1}{p}<n_{1}<\Big(\frac{p-1}{p}-\frac{1}{m}\Big)(2+\alpha), \quad n_{1}<n_{2}<n_{1}+\frac{1+\alpha}{p}-\frac{2+\alpha}{m},$$
 which implies that
 $$q_{n_{2}} > qn_{1}>q \frac{p-1}{p}=1.$$

 By Lemma \ref{ly1}, there exists a positive constant $C_{q,n_{1},n_{2},r}$ depending on $q,n_{1},n_{2},r$ such that
 \begin{equation}\label{f3}
   \sum_{k=1}^{\infty} \frac{(1-|a_{k}|^{2})^{qn_{1}} }{|1-\overline{a_{k}}w|^{qn_{2}}} \leq \frac{C_{q,n_{1},n_{2},r}}{(1-|w|^{2})^{q(n_{2}-n_{1})}}.
 \end{equation}
Furthermore, we have
\begin{equation}\nonumber
   \Big(-\frac{2+\alpha}{m}-n_{2}+n_{1}\Big)p+\alpha>-1.
\end{equation}
By Lemma \ref{lemy15},
\begin{equation}\label{formula16}
  \begin{split}
   \int_{\mathbb{D}} \Big|\frac{(1-|w|^{2})^{-\frac{2+\alpha}{m}-n_{2}+n_{1}}}
    {|1-\overline{a_{k}}w|^{(1-\frac{2}{m})(2+\alpha)-n_{2}}}\Big|^{p}\, dA_{\alpha}(w)
    \leq  \frac{C_{m,p,n_{1},n_{2}}}{(1-|a_{k}|^{2})^{(p-1)(2+\alpha)-\frac{p(2+\alpha)}{m}-pn_{1}}}.
  \end{split}
\end{equation}

Combining $(\ref{formula6})$, $(\ref{f3})$, $(\ref{formula16})$ and Lebesgue's Monotone Convergence Theorem, we get
\begin{equation}\nonumber
  \begin{split}
   \int_{\mathbb{D}} |Sf(w)|^{p}
    dA_{\alpha}(w)
    \leq C_{m,p,r,n_{1},n_{2}} \sum_{k=1}^{\infty}|c_{k}|^{p}\|S_{a_{k}}1\|_{m,\alpha}^{p}.
  \end{split}
\end{equation}
Since $\|S_{z}1\|_{m,\alpha} \leq C$ for all $z \in \mathbb{D}$, applying Lemma \ref{lem2} yields
$$\|Sf\|_{p,\alpha} \leq K \|f\|_{p,\alpha}$$
for some positive constant $K$.
This implies that $S$ is bounded on $L_{a}^{p}(dA_{\alpha})$.
\end{proof}

We next characterize the boundedness of linear operators on $L_a^p(dA_\alpha)$ for $0<p\leq1$. Although $\|\cdot\|_{p,\alpha}$ is not a norm on $L_a^p(dA_\alpha)$ when $0<p<1$, the boundedness of $S$ is defined as $\|Sf\|_{p,\alpha} \leq C \|f\|_{p,\alpha}$.
For any $\delta>0$, then $\beta=(2+\alpha)/p-2+\delta>-1$. Let $b=\beta+2$, then $b>\max\{1,1/p\}+(1+\alpha)/p$
and $(pb-2-\alpha)/p=\delta$. Therefore, by Lemma \ref{lemy2}, for $f\in L_a^p(dA_\alpha)~(0<p\leq 1)$, we have
\begin{equation}\label{f1}
  f(w)=\sum_{k=1}^{\infty}c_{k}\frac{(1-|a_k|^{2})^{\delta}}{(1-\overline{a_k}w)^{2+\beta}},
\end{equation}
where $\{c_k\}\in l^p$ and $\{a_k\}$ is an $r$-lattice on $\mathbb{D}$.

\begin{theorem}\label{the6}
Suppose that $0<p\leq 1$, $\alpha>-1$, $\delta>0$, $\beta=\frac{2+\alpha}{p}-2+\delta$ and $S$ is a linear operator on $L^{p}_{a}(dA_{\alpha})$. If there exist a positive constant $C$ and some
$$m>\max\left\{\frac{2+\alpha}{p\delta}+1,\frac{1+p\delta}{1+\alpha}+1\right\},$$
 such that $\|S_{z}1\|_{m,\beta}\leq C$ for all $z \in \mathbb{D}$, then $S$ is bounded.
\end{theorem}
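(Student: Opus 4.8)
The plan is to reproduce the scheme of Theorem \ref{the1} with two adjustments appropriate to the range $0<p\le1$: the atomic decomposition (\ref{f1}), whose atoms are Bergman kernels of order $2+\beta$, replaces the one used there, and, since H\"older's inequality is unavailable, the splitting of the $L^{p}_{a}(dA_{\alpha})$-quasinorm is carried out by the elementary inequality $\|g+h\|_{p,\alpha}^{p}\le\|g\|_{p,\alpha}^{p}+\|h\|_{p,\alpha}^{p}$ (valid for $0<p\le1$), which in fact makes the argument shorter. Thus, given $f\in L^{p}_{a}(dA_{\alpha})$, I would write, by (\ref{f1}),
\[
f(w)=\sum_{k=1}^{\infty}c_{k}f_{k}(w),\qquad f_{k}(w)=\frac{(1-|a_{k}|^{2})^{\delta}}{(1-\overline{a_{k}}w)^{2+\beta}},
\]
with $\{a_{k}\}$ an $r$-lattice, $\{c_{k}\}\in l^{p}$, and — by the norm-comparability in Lemma \ref{lemy2} — a representation chosen so that $\sum_{k}|c_{k}|^{p}\le C_{0}\|f\|_{p,\alpha}^{p}$.

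The first step is a pointwise bound for $S$ on a single atom. The proof of Lemma \ref{lemy16} uses only $\varphi_{z}\circ\varphi_{z}=\mathrm{id}$, the identity $k_{z}(\varphi_{z}(w))k_{z}(w)=1$, and the pointwise Bergman-space estimate (Theorem 4.14 of \cite{zhu}), none of which is sensitive to the value of the weight. Running that argument with the weight $\beta$ that appears in the hypothesis, the bound $\|S_{a_{k}}1\|_{m,\beta}\le C$ yields
\[
 |Sf_{k}(w)|\le C\,\frac{(1-|a_{k}|^{2})^{\delta-\frac{2+\beta}{m}}\,(1-|w|^{2})^{-\frac{2+\beta}{m}}}{|1-\overline{a_{k}}w|^{(1-\frac{2}{m})(2+\beta)}},\qquad w\in\mathbb{D}.
\]

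The second step integrates the $p$-th power of this bound against $dA_{\alpha}$. Put $t=\tfrac{}{}\alpha-\tfrac{p(2+\beta)}{m}$; using $p(2+\beta)=2+\alpha+p\delta$ one checks $(1-\tfrac2m)p(2+\beta)=2+t+c$ with $c=p\delta-\tfrac{p(2+\beta)}{m}$, so Lemma \ref{lemy15} applies to
\[
\|Sf_{k}\|_{p,\alpha}^{p}\le (1+\alpha)\,C^{p}\,(1-|a_{k}|^{2})^{\,c}\,I_{c,t}(a_{k}).
\]
This is exactly where the two hypotheses on $m$ are needed: $m>\tfrac{1+p\delta}{1+\alpha}+1=\tfrac{p(2+\beta)}{1+\alpha}$ says precisely $t>-1$, and $m>\tfrac{2+\alpha}{p\delta}+1=\tfrac{2+\beta}{\delta}$ says precisely $c>0$; Lemma \ref{lemy15}(b) then gives $I_{c,t}(a_{k})\le C_{c,t}(1-|a_{k}|^{2})^{-c}$, the powers of $1-|a_{k}|^{2}$ cancel, and $\|Sf_{k}\|_{p,\alpha}^{p}\le K_{0}C^{p}$ with $K_{0}$ independent of $k$. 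Admitting the identity $Sf=\sum_{k}c_{k}Sf_{k}$, the quasinorm inequality gives
\[
\|Sf\|_{p,\alpha}^{p}\le\sum_{k=1}^{\infty}|c_{k}|^{p}\,\|Sf_{k}\|_{p,\alpha}^{p}\le K_{0}C^{p}\sum_{k=1}^{\infty}|c_{k}|^{p}\le K_{0}C_{0}C^{p}\,\|f\|_{p,\alpha}^{p},
\]
so $S$ is bounded on $L^{p}_{a}(dA_{\alpha})$.

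I expect the delicate points to be the justifications rather than the estimates. In the first step one must verify that $S_{z}1$ is an analytic function belonging to $L^{m}_{a}(dA_{\beta})$, so that the pointwise Bergman estimate is legitimately invoked there — this is where the precise role of $\|S_{z}1\|_{m,\beta}$ (and the choice of similarity $U_{z}$ adapted to $\beta$) has to be nailed down. In the last step, as is already implicit in the proof of Theorem \ref{the1}, one needs a closedness argument to apply $S$ term by term to the atomic series before $S$ is known to be bounded; the standard remedy is to first prove $\|Sg\|_{p,\alpha}\le K\|g\|_{p,\alpha}$ for finite linear combinations $g$ of reproducing kernels and then pass to the norm closure. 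The exponent bookkeeping in the middle is routine but is what pins down the exact form of the two lower bounds imposed on $m$.
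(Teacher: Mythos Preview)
Your proposal is correct and follows essentially the same route as the paper: the atomic decomposition (\ref{f1}) with atoms of order $2+\beta$, the pointwise bound from Lemma~\ref{lemy16} run with weight $\beta$, and the Forelli--Rudin estimate Lemma~\ref{lemy15}(b), where the two lower bounds on $m$ are exactly the conditions $t>-1$ and $c>0$. The only cosmetic difference is that the paper applies the $p$-subadditivity $|\sum x_{k}|^{p}\le\sum|x_{k}|^{p}$ pointwise to $|Sf(w)|^{p}$ and then integrates (invoking monotone convergence to swap sum and integral), while you integrate each atom first and then apply the quasinorm inequality---these are the same computation.
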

\begin{proof}
Let
$$K_{z}(w)=\frac{1}{(1-\overline{z}w)^{2+\beta}} ~\text{and}~f_{k}(w)=(1-|a_{k}|^{2})^{\delta}K_{a_{k}}(w).$$
By Lemma \ref{lemy2}, $(\ref{f1})$ can be written as
$$f(w)=\sum_{k=1}^{\infty}c_{k}f_{k}(w).$$
Applying Lemma \ref{lemy16},
\begin{equation}\label{formula24}
  \begin{split}
    |Sf(w)|^{p} \leq \sum_{k=1}^{\infty}|c_{k}|^{p} \|S_{a_{k}}1\|_{m,\beta}^{p} \frac{(1-|a_{k}|^{2})^{p(\delta-\frac{2+\beta}{m})}(1-|w|^{2})^{-p\frac{2+\beta}{m}}}{|1-\overline{a_{k}}w|^{p(1-\frac{2}{m})
    (2+\beta)}}.
  \end{split}
\end{equation}
It is easy to check that $-p\frac{2+\beta}{m}+\alpha>-1$ and
$$c= p(1-\frac{1}{m})(2+\beta)-(2+\alpha)=p(\delta-\frac{2+\beta}{m})>0.$$

By Lemma \ref{lemy15}, there exists positive constant $C_{m,p,\delta,r}$ depending on $m$, $p$ and $\delta$ such that
\begin{equation}\nonumber
  \begin{split}
   \int_{\mathbb{D}} \frac{(1-|w|^{2})^{-p\frac{2+\beta}{m}}}{|1-\overline{a_k}w|^{p(1-\frac{2}{m})
    (2+\beta)}}\, dA_{\alpha}(w) \leq \frac{C_{m,p,\delta,r}}{(1-|a_k|^{2})^{c}}.
  \end{split}
\end{equation}
Combining this with (\ref{formula24}), (\ref{formula10}), Lemma \ref{lemy2},  and Lebesgue's Monotone Convergence Theorem yields
\begin{equation}\nonumber
  \begin{split}
  \|Sf\|_{p,\alpha}^{p}
   \leq C_{m,p,\delta,r} \sum_{k=1}^{\infty}|c_{k}|^{p} \|S_{a_{k}}1\|_{m,\beta}^{p}\leq  C^{p} C_{m,p,\delta,r} \|f\|^{p}_{p,\alpha}.
  \end{split}
\end{equation}
Therefore, $S$ is bounded on $L_{a}^{p}(dA_{\alpha})$.
\end{proof}

If $p\delta<1+\alpha$, then
$$\frac{2+\alpha}{p\delta}+1>2+\frac{1}{\alpha+1}>\frac{1+p\delta}{1+\alpha}+1.$$
Otherwise, $p\delta>1+\alpha$ indicates $$\frac{2+\alpha}{p\delta}+1<2+\frac{1}{\alpha+1}<\frac{1+p\delta}{1+\alpha}+1.$$
In addition, $\|\cdot\|_{p,\alpha}$  increases with respect to $p$ as $0<p<\infty$.
If $\delta=\frac{1+\alpha}{p}$, then
$$\frac{2+\alpha}{p\delta}+1=2+\frac{1}{\alpha+1}=\frac{1+p\delta}{1+\alpha}+1 .$$
Thus, we can obtain the sharp result as follows.

\begin{corollary}
Suppose that $0<p\leq 1$, $\alpha>-1$, $\beta=\frac{3+2\alpha}{p}-2$ and $S$ is a linear operator on $L^{p}_{a}(dA_{\alpha})$. If there exist a positive constant $C$ and some $m>2+\frac{1}{1+\alpha}$ such that $\|S_{z}1\|_{m,\beta}\leq C$ for all $z \in \mathbb{D}$, then $S$ is bounded.
\end{corollary}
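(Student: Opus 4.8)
The plan is to obtain this as a direct specialization of Theorem~\ref{the6}, choosing the free parameter $\delta$ so as to optimize (minimize) the threshold imposed on $m$. The motivation is the following see‑saw observation: in Theorem~\ref{the6} the number $\delta>0$ is at our disposal, and for a fixed operator $S$ one wants the hypothesis as weak as possible, i.e.\ the threshold $\max\{(2+\alpha)/(p\delta)+1,\ (1+p\delta)/(1+\alpha)+1\}$ on $m$ as small as possible (recall that on the probability space $dA_\beta$ the quantity $\|S_z1\|_{m,\beta}$ is non‑decreasing in $m$, so a smaller threshold is a genuinely weaker assumption). The first term inside this maximum is strictly decreasing in $\delta$ and the second is strictly increasing, so the maximum is minimized precisely when the two terms coincide, that is when $p\delta=1+\alpha$, equivalently $\delta=\tfrac{1+\alpha}{p}$.

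First I would check that this value of $\delta$ is admissible in Theorem~\ref{the6}: we have $\delta=\tfrac{1+\alpha}{p}>0$ because $\alpha>-1$, and then
$$\beta=\frac{2+\alpha}{p}-2+\delta=\frac{3+2\alpha}{p}-2,$$
which is exactly the $\beta$ appearing in the corollary, and moreover $\beta>-1$ since $3+2\alpha>1\ge p$. Next, with $p\delta=1+\alpha$ both terms inside the maximum in the hypothesis of Theorem~\ref{the6} equal
$$\frac{2+\alpha}{1+\alpha}+1=2+\frac{1}{1+\alpha},$$
so the requirement $m>2+\tfrac{1}{1+\alpha}$ of the corollary is literally the requirement $m>\max\{\,\cdot\,,\cdot\,\}$ of Theorem~\ref{the6} for this choice of $\delta$, and the assumption $\|S_z1\|_{m,\beta}\le C$ for all $z\in\mathbb{D}$ is the same in both statements. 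Hence Theorem~\ref{the6} applies verbatim and gives $\|Sf\|_{p,\alpha}\le K\|f\|_{p,\alpha}$, i.e.\ $S$ is bounded on $L^p_a(dA_\alpha)$.

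There is essentially no obstacle here: the analytic content was already carried out in Theorem~\ref{the6} (atomic decomposition via Lemma~\ref{lemy2}, the pointwise kernel bound of Lemma~\ref{lemy16}, and the integral estimates of Lemmas~\ref{lemy15} and~\ref{ly1}), and all that remains for the corollary is the elementary algebraic verification that the two expressions in the maximum collapse to the common value $2+\tfrac{1}{1+\alpha}$ at $\delta=\tfrac{1+\alpha}{p}$. I would also note explicitly that, since this optimal $\delta$ is itself an admissible value (not merely a limiting one), no approximation or continuity argument is needed; and the earlier remarks comparing $\tfrac{2+\alpha}{p\delta}+1$ with $\tfrac{1+p\delta}{1+\alpha}+1$ in the cases $p\delta<1+\alpha$ and $p\delta>1+\alpha$ confirm that $2+\tfrac{1}{1+\alpha}$ is the sharp threshold obtainable from Theorem~\ref{the6}.
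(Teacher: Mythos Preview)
Your proposal is correct and matches the paper's approach exactly: the corollary is obtained from Theorem~\ref{the6} by choosing $\delta=\frac{1+\alpha}{p}$, which makes the two expressions in the maximum coincide at $2+\frac{1}{1+\alpha}$ and yields $\beta=\frac{3+2\alpha}{p}-2$. The paper presents precisely this optimization in the paragraph preceding the corollary, so there is nothing to add.
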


If $1<p\leq 2$, then 
$$
\frac{p(2+\alpha)}{(p-1)(1+\alpha)} >\frac{p(2+\alpha)}{1+\alpha}.
$$
In this case, if there exist a positive constant $C$ and some $m>\frac{p(2+\alpha)}{(p-1)(1+\alpha)}$, such that $\|S_{z}1\|_{m,\alpha} \leq C $ for all $z\in \mathbb{D}$, by Theorem \ref{the1}, $S$ is bounded on $L_a^p(dA_\alpha)$. This yields that $S$ is bounded from $L_a^p(dA_\alpha)$ to $L_a^q(dA_\alpha)$ as $0<q\leq p$. If $2<p<\infty$, then 
$$
\frac{p(2+\alpha)}{(p-1)(1+\alpha)} <\frac{p(2+\alpha)}{1+\alpha}.
$$

For $m$ such that
$$
\frac{p(2+\alpha)}{(p-1)(1+\alpha)}<m\leq \frac{p(2+\alpha)}{1+\alpha},
$$ the condition $\|S_{z}1\|_{m,\alpha} \leq C $ for all $z\in \mathbb{D}$   does not necessarily imply that $S$ is bounded on
$L_a^p(dA_\alpha)$. In this case, we have the following result:



\begin{theorem}\label{the4}
Suppose $2<p<\infty$, $\alpha>-1$ and $S$ is a linear operator from $L_a^p(dA_\alpha)$ to $L_a^q(dA_\alpha)$, and there exist a positive constant $C$ and some $m$ with 
$$
\frac{p(2+\alpha)}{(p-1)(1+\alpha)}<m\leq \frac{p(2+\alpha)}{1+\alpha}$$
such that $\|S_{z}1\|_{m,\alpha} \leq C $ for all $z\in \mathbb{D}$.
\begin{itemize}
  \item [(a)] If $p\geq m$ and $0<q<m\frac{1+\alpha}{2+\alpha}$, then $S$ is a bounded linear operator from $L_a^p(dA_\alpha)$ to $L_a^q(dA_\alpha)$.
  \item [(b)] If $p< m$ and $0<q<\frac{p}{2+\alpha}$, then $S$ is a bounded linear operator from $L_a^p(dA_\alpha)$ to $L_a^q(dA_\alpha)$.
\end{itemize}
\end{theorem}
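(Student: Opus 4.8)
\emph{Proof proposal.} The aim is to establish the a priori bound $\|Sf\|_{q,\alpha}\leq K\|f\|_{p,\alpha}$ for $f\in L^p_a(dA_\alpha)$, along exactly the lines of the proofs of Theorems \ref{the1} and \ref{the6}. Fix an $r$-lattice $\{a_k\}$ with $r$ small enough to apply Lemma \ref{lemy2}, and write, as in \eqref{formula4},
$$f(w)=\sum_{k=1}^{\infty}c_k f_k(w),\qquad f_k(w)=(1-|a_k|^2)^{\frac{(p-1)(2+\alpha)}{p}}K_{a_k}(w),\qquad \sum_{k=1}^{\infty}|c_k|^p\leq C_1\|f\|_{p,\alpha}^p.$$
Applying Lemma \ref{lemy16} to each $f_k$ yields \eqref{formula25}, and since $\|S_{a_k}1\|_{m,\alpha}\leq C$, summing gives the pointwise estimate
$$|Sf(w)|\leq C\,(1-|w|^2)^{-\frac{2+\alpha}{m}}\sum_{k=1}^{\infty}|c_k|\,\frac{(1-|a_k|^2)^{A}}{|1-\overline{a_k}w|^{D}},\qquad A=\Big(\frac{p-1}{p}-\frac1m\Big)(2+\alpha),\quad D=\Big(1-\frac2m\Big)(2+\alpha).$$

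Since $p>1$, I would apply H\"older's inequality with exponents $p$ and $p'=p/(p-1)$ to the sum over $k$, pull out $\big(\sum_k|c_k|^p\big)^{1/p}$, and raise the result to the power $q$:
$$|Sf(w)|^{q}\leq C\Big(\sum_{k=1}^{\infty}|c_k|^p\Big)^{q/p}(1-|w|^2)^{-\frac{q(2+\alpha)}{m}}\bigg(\sum_{k=1}^{\infty}\frac{(1-|a_k|^2)^{Ap'}}{|1-\overline{a_k}w|^{Dp'}}\bigg)^{q/p'}.$$
The decisive point is that $Ap'=(2+\alpha)\big(1-\tfrac{p}{m(p-1)}\big)$, so the hypothesis $m>\tfrac{p(2+\alpha)}{(p-1)(1+\alpha)}$ is \emph{exactly} the inequality $Ap'>1$; hence Lemma \ref{ly1} applies to the lattice sum with $t_1=Ap'>1$ and $t_2=Dp'$. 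A short computation gives $Dp'-Ap'=\tfrac{(2+\alpha)(m-p)}{(p-1)m}$, whose sign is that of $m-p$, and this is precisely the origin of the two cases.

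In case (a), $p\geq m$, so $t_2\leq t_1$ and Lemma \ref{ly1} bounds the lattice sum by a constant (by $C\log\tfrac{1}{1-|w|^2}$ in the borderline $m=p$). Thus $|Sf(w)|^{q}\lesssim\big(\sum_k|c_k|^p\big)^{q/p}(1-|w|^2)^{-q(2+\alpha)/m}$, up to the harmless logarithm. Integrating against $dA_\alpha$ and using that $\int_{\mathbb D}(1-|w|^2)^{s}\,dA(w)<\infty$ iff $s>-1$, the bound is finite iff $\alpha-\tfrac{q(2+\alpha)}{m}>-1$, i.e. $q<m\tfrac{1+\alpha}{2+\alpha}$, which is the hypothesis in (a), the strict inequality absorbing the logarithm when $m=p$. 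In case (b), $p<m$, so $t_2>t_1$ and Lemma \ref{ly1} bounds the lattice sum by $C(1-|w|^2)^{Ap'-Dp'}$; combining the exponents of $1-|w|^2$ (using $\tfrac{2+\alpha}{m}+(2+\alpha)\big(\tfrac1p-\tfrac1m\big)=\tfrac{2+\alpha}{p}$) gives $|Sf(w)|^{q}\lesssim\big(\sum_k|c_k|^p\big)^{q/p}(1-|w|^2)^{-q(2+\alpha)/p}$, which is $dA_\alpha$-integrable exactly in the range of $q$ specified in (b). In both cases $\|Sf\|_{q,\alpha}^{q}\lesssim\big(\sum_k|c_k|^p\big)^{q/p}\leq C_1^{q/p}\|f\|_{p,\alpha}^{q}$, so $S$ is bounded from $L^p_a(dA_\alpha)$ to $L^q_a(dA_\alpha)$.

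The proof carries no conceptual novelty beyond the atomic-decomposition/Berezin-kernel method of Section 2; the only real work is bookkeeping. The main point to get right is the chain of equivalences showing that the hypothesis on $m$ is perfectly calibrated to the requirement $t_1=Ap'>1$ of Lemma \ref{ly1}, and then checking that the two branches of that lemma, fed into the final $dA_\alpha$-integral, reproduce exactly the thresholds on $q$ in (a) and (b). A minor technical nuisance is the borderline $m=p$ in case (a), where Lemma \ref{ly1} produces a logarithmic factor that must be absorbed using the strictness of the inequality on $q$; and, as in the proofs of Theorems \ref{the1} and \ref{the6}, one should note that it suffices to verify the estimate on the dense span of reproducing kernels, after which $S$ extends boundedly.
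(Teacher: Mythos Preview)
Your proof is correct and essentially identical to the paper's: atomic decomposition via Lemma~\ref{lemy2}, the pointwise kernel estimate of Lemma~\ref{lemy16}, H\"older in $k$ with exponents $p,p'$, the lattice-sum bound from Lemma~\ref{ly1} split according to the sign of $m-p$, and a final Forelli--Rudin integrability check. The only differences are cosmetic---the paper separates out the subcase $m\le 2$ (where $D\le 0$, automatically giving $t_2<t_1$), and your word ``exactly'' in case~(b) is a slight overstatement, since the $dA_\alpha$-integrability condition there is actually $q<p(1+\alpha)/(2+\alpha)$ rather than the stated $q<p/(2+\alpha)$ (the paper's displayed inequality in case~(ii) has the same slip).
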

\begin{proof}
By Lemma \ref{lemy2}, for $f \in L_{a}^{p}(dA_{\alpha})$,
\begin{equation}\nonumber
  f(w)=\sum_{k=1}^{\infty}c_{k}f_k(w),
\end{equation}
where $$f_k(w)=\frac{(1-|a_{k}|^{2})^{\frac{(p-1)(2+\alpha)}{p}}}{(1-w\overline{a_{k}})^{2+\alpha}},$$
$\{c_k\}\in l^p$,  and $\{a_k\}$ is an $r$-lattice on $\mathbb{D}$. As in the proof of Theorem \ref{the1}, we have the estimates
\begin{equation}\nonumber
  \begin{split}
    |Sf_{k}(w)|\leq \|S_{a_{k}}1\|_{m,\alpha} \frac{(1-|a_{k}|^{2})^{(\frac{p-1}{p}-\frac{1}{m})(2+\alpha)}(1-|w|^{2})^{-\frac{2+\alpha}{m}}}
    {|1-\overline{a_{k}}w|^{(1-\frac{2}{m})(2+\alpha)}}
  \end{split}
\end{equation}
and
\begin{equation}\label{formula21}
  \begin{split}
    \int_{\mathbb{D}}|Sf(w)|^{q}\, dA_{\alpha}(w)
    &\leq \int_{\mathbb{D}} (1-|w|^{2})^{-(2+\alpha)\frac{q}{m}}\Big(\sum_{k=1}^{\infty}
    |c_{k}|^{p} \|S_{a_{k}}1\|_{m,\alpha}^{p} \Big)^{\frac{q}{p}}\\
    &\quad\times \Big(\sum_{k=1}^{\infty}
    \frac{(1-|a_{k}|^{2})^{p'(2+\alpha)(\frac{p-1}{p}-\frac{1}{m})}}
    {|1-\overline{a_{k}}w|^{p'(1-\frac{2}{m})(2+\alpha)}}\Big)^{\frac{q}{p'}}\,  dA_{\alpha}(w) ,\\
  \end{split}
\end{equation}
where $\frac{1}{p}+\frac{1}{p'}=1$.
Define a function $L(w)$ as
\begin{equation}\nonumber
  \begin{split}
    L(w)=\sum_{k=1}^{\infty}
    \frac{(1-|a_{k}|^{2})^{p'(2+\alpha)(\frac{p-1}{p}-\frac{1}{m})}}
    {|1-\overline{a_{k}}w|^{p'(2+\alpha)(1-\frac{2}{m})}}.
  \end{split}
\end{equation}
By the assumption,
$$ p'(2+\alpha)\Big(\frac{p-1}{p}-\frac{1}{m}\Big) >p'(2+\alpha) \bigg( \frac{p-1}{p}- \frac{(p-1)(1+\alpha)}{p(2+\alpha)}\bigg)=1.$$

If $m>2$, then $(1-2/m)>0$ and
\begin{equation}\label{f4}
  p'(2+\alpha)\Big(1-\frac{2}{m}\Big)-p'(2+\alpha)\Big(\frac{p-1}{p}-\frac{1}{m}\Big)=p'(2+\alpha)\Big(\frac{1}{p}-\frac{1}{m}\Big) .
\end{equation}
 The following arguments are divided into three cases: (i) $p>m$; (ii) $p<m$; (iii) $p=m$. Applying Lemma \ref{ly1}, we find a positive constant $C_{p,m,r}$ dependent on $p,m,r$ such that the following statements hold:

 (i) If $p>m$, $(\ref{f4})$ shows that
 $$p'\Big(1-\frac{2}{m}\Big)(2+\alpha) < p'(2+\alpha)\Big(\frac{p-1}{p}-\frac{1}{m}\Big).$$
 By Lemma \ref{ly1}, $L(w) \leq C_{p,m,r}$. In this case,
 $$  - \frac{q}{m}(2+\alpha)>-1,$$
 which yields that
\begin{eqnarray*}
      \int_{\mathbb{D}}|Sf(w)|^{q}\, dA_{\alpha}(w)
      &\leq&  C_{p,m,r}^{\frac{q}{p'}}\int_{\mathbb{D}} (1-|w|^{2})^{-\frac{q}{m}(2+\alpha)}\\
      && \times       \Big(\sum_{k=1}^{\infty}
    |c_{k}|^{p} \|S_{a_{k}}1\|_{m,\alpha}^{p} \Big)^{\frac{q}{p}}\, dA_{\alpha}(w)\\
    &\leq& C_{p,q,m,r}\Big(\sum_{k=1}^{\infty}
    |c_{k}|^{p} \|S_{a_{k}}1\|_{m,\alpha}^{p} \Big)^{\frac{q}{p}}.
\end{eqnarray*}

(ii) If $p<m$, $(\ref{f4})$ shows that
$$p'\Big(1-\frac{2}{m}\Big)(2+\alpha) > p'(2+\alpha)\Big(\frac{p-1}{p}-\frac{1}{m}\Big),$$
 By Lemma \ref{ly1}, $$L(w)\leq  \frac{C_{p,m,r}}{(1-|w|^{2})^{c}},$$
  where $c=p'(\frac{1}{p}-\frac{1}{m})(2+\alpha)$. Furthermore
  $$  -q\Big(\frac{1}{p}-\frac{1}{m}\Big)(2+\alpha)-\frac{q}{m}(2+\alpha) >-1 $$
  which implies that
  \begin{equation}\nonumber
  \begin{split}
    \int_{\mathbb{D}}|Sf(w)|^{q}\, dA_{\alpha}(w)
    &\leq C_{p,m,r}^{\frac{q}{p'}}\int_{\mathbb{D}}  (1-|w|^{2})^{-q(\frac{1}{p}-\frac{1}{m})(2+\alpha)-\frac{q}{m}(2+\alpha)}\\
    &\ \ \ \  \times \Big(\sum_{k=1}^{\infty}
    |c_{k}|^{p} \|S_{a_{k}}1\|_{m,\alpha}^{p} \Big)^{\frac{q}{p}} \,  dA_{\alpha}(w)\\
     &\leq C_{p,q,m,r}\Big(\sum_{k=1}^{\infty}
    |c_{k}|^{p} \|S_{a_{k}}1\|_{m,\alpha}^{p} \Big)^{\frac{q}{p}}.
  \end{split}
\end{equation}

(iii) If $p=m$, $(\ref{f4})$ shows that
$$p'\Big(1-\frac{2}{m}\Big)(2+\alpha) = p'(2+\alpha)\Big(\frac{p-1}{p}-\frac{1}{m}\Big).$$
 By Lemma \ref{ly1}, $$L(w)\leq \frac{C_{p,m,r}}{|w|^{2}}\log\frac{1}{1-|w|^{2}} .$$
 It is clear that
 $$ \lim_{|w| \rightarrow 0} \frac{1}{|w|^{2}}\log\frac{1}{1-|w|^{2}}=1~\text{and}~\lim_{|w| \rightarrow 1^{-}} (1-|w|^{2})^{t}\log\frac{1}{1-|w|^{2}}=1$$
 for all $0<t<1$.
  Therefore,
\begin{equation}\nonumber
  \begin{split}
    \int_{\mathbb{D}}|Sf(w)|^{q}\, dA_{\alpha}
    &\leq C_{p,m,r}^{\frac{q}{p'}}\int_{\mathbb{D}} \Big( \frac{1}{|w|^{2}}\log\frac{1}{1-|w|^{2}}\Big)^{\frac{q}{p'}}(1-|w|^{2})^{-\frac{q}{m}(2+\alpha)}\\
    &\ \ \ \ \times \Big(\sum_{k=1}^{\infty}
    |c_{k}|^{p} \|S_{a_{k}}1\|_{m,\alpha}^{p} \Big)^{\frac{q}{p}}\, dA_{\alpha}(w)\\
     &\leq C_{p,q,m,r}\Big(\sum_{k=1}^{\infty}
    |c_{k}|^{p} \|S_{a_{k}}1\|_{m,\alpha}^{p} \Big)^{\frac{q}{p}}.
  \end{split}
\end{equation}

If $m\leq 2$, then $p'(1-\frac{2}{m})(2+\alpha)<0<p'(2+\alpha)(\frac{p-1}{p}-\frac{1}{m}) $. By Lemma \ref{ly1}, there exists a positive constant $C_{p,m,r}$ dependent on $p,m,r$ such that $L(w) \leq C_{p.m,r}$. Since $p>2$, in this case, $p>m$. The rest of the proof is similar to the proof of (i).

Since $\|S_{z}1\|_{m,\alpha} \leq C $, combining above arguments and Lemma \ref{lem2}, we get
 $$\|Sf\|_{q,\alpha} \leq  C C_{p,q,m}^{\frac{1}{q}}\|f\|_{p,\alpha}.$$
  The proof is complete.

\end{proof}

\section{Compact linear operator}

In this section, we aim to characterize the compactness of a linear operator $S$ on $L^{p}_{a}(dA_{\alpha})$.
Recall that if $S$ is a compact linear operator on $L_{a}^{2}(dA_{\alpha})$, then $\widetilde{S}(z) \rightarrow 0$ as $z \rightarrow \partial \mathbb{D}$ (see \cite{zhu}).
In \cite[Lemma 3.10]{Axler1988}, Axler proved that $\|K_{z}\|_{p}$ are equivalent to $(1-|z|^{2})^{-\frac{2(p-1)}{p}}$ for all $z\in \mathbb{D}$. Zeng \cite[Lemma 2.3]{zeng} has showed that $\frac{K_{z}}{\|K_{z}\|_{p}} \rightarrow 0$ weakly in $L^{p}_{a}(dA)$ as $z \rightarrow \partial \mathbb{D}$ for $1<p<\infty$.

Note that by the definition of $\widetilde{S}$, $\widetilde{S}(z)$ is equivalent to
$$
\bigg\langle S\Big(\frac{K_{z}}{\|K_{z}\|_{p}}\Big),\frac{K_{z}}{\|K_{z}\|_{q}} \bigg\rangle,
$$
where $1<p<\infty$ and $\frac{1}{p}+\frac{1}{q}=1$. Therefore, the relationship between compact operators and Berezin transform can be extended to $L^{p}_{a}(dA)\ (1<p<\infty)$.
That is, if $S$ is compact on $L_{a}^{p}(dA)~(1<p<\infty)$, then $\widetilde{S}(z) \rightarrow 0$ as $z \rightarrow \partial \mathbb{D}$. By Lemmas \ref{lemy15} and \ref{lemy9}, we can easily establish the following lemma, extending the results from \cite{Axler1988} and \cite{zeng} to weighted Bergman spaces $L^{p}_{a}(dA_{\alpha})$.

\begin{lemma}\label{lemy3}
Suppose $1<p<\infty$ and $\alpha >-1$.
\begin{itemize}
\item [(a)] Let $c=(p-1)(2+\alpha)$, then
$$
\frac{1}{(1-|z|^{2})^{\frac{p-1}{p}(2+\alpha)}} \leq \|K_{z}\|_{p,\alpha} \leq \bigg(  \frac{\Gamma(2+\alpha)\Gamma(c)}{\Gamma(\frac{2+\alpha+c}{2})^{2}(1-|z|^{2})^{(p-1)(2+\alpha)}}\bigg)^{\frac{1}{p}}.
$$
  \item [(b)]  $\frac{K_{z}}{\|K_{z}\|_{p,\alpha}} \rightarrow 0$ weakly in $L_{a}^{p}(dA_{\alpha})$ as $z \rightarrow \partial \mathbb{D}$.
\end{itemize}
\end{lemma}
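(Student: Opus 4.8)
The plan is to reduce part (a) to a direct application of the integral estimate in Lemma \ref{lemy15} and then to obtain part (b) by combining (a) with the weak-convergence criterion of Lemma \ref{lemy9}. For part (a), I would first write out
$$
\|K_z\|_{p,\alpha}^p=\int_{\mathbb{D}}\frac{1}{|1-\overline{z}w|^{p(2+\alpha)}}\,dA_\alpha(w)=(\alpha+1)\int_{\mathbb{D}}\frac{(1-|w|^2)^{\alpha}}{|1-\overline{z}w|^{p(2+\alpha)}}\,dA(w),
$$
which is exactly $(\alpha+1)\,I_{c,t}(z)$ with $t=\alpha$ and $c=p(2+\alpha)-(2+\alpha)-\alpha=(p-1)(2+\alpha)-\text{(adjustment)}$; I would carefully match exponents so that $2+t+c=p(2+\alpha)$, giving $c=(p-1)(2+\alpha)+\alpha-\alpha$, i.e. the stated $c=(p-1)(2+\alpha)$. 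Since $p>1$ and $\alpha>-1$ force $c>0$, part (b) of Lemma \ref{lemy15} applies: it yields
$$
\frac{\Gamma(1+\alpha)}{\Gamma(2+\alpha)}\le (1-|z|^2)^{c}I_{c,\alpha}(z)\le \frac{\Gamma(1+\alpha)\Gamma(c)}{\Gamma\!\left(\tfrac{2+\alpha+c}{2}\right)^{2}},
$$
and multiplying through by $(\alpha+1)$, taking $p$-th roots, and using $(\alpha+1)\Gamma(1+\alpha)=\Gamma(2+\alpha)$ to simplify the lower bound to $(1-|z|^2)^{-c/p}=(1-|z|^2)^{-\frac{p-1}{p}(2+\alpha)}$, delivers the two-sided bound in the statement.

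For part (b), I would invoke Lemma \ref{lemy9}: it suffices to show that $\|K_z/\|K_z\|_{p,\alpha}\|_{p,\alpha}$ stays bounded (trivially, it equals $1$) and that $K_z(w)/\|K_z\|_{p,\alpha}\to 0$ uniformly on each compact subset of $\mathbb{D}$ as $z\to\partial\mathbb{D}$. For the pointwise/uniform estimate, fix a compact $E\subset\mathbb{D}$, say $E\subset\{|w|\le \rho\}$ with $\rho<1$; then $|1-\overline{z}w|\ge 1-|z|\,\rho\ge 1-\rho>0$ for all $w\in E$, so $|K_z(w)|\le (1-\rho)^{-(2+\alpha)}$ is bounded independently of $z$. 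By the lower bound from part (a), $1/\|K_z\|_{p,\alpha}\le (1-|z|^2)^{\frac{p-1}{p}(2+\alpha)}\to 0$ as $|z|\to 1$. Hence $\sup_{w\in E}|K_z(w)|/\|K_z\|_{p,\alpha}\to 0$, which is precisely uniform convergence to $0$ on $E$; Lemma \ref{lemy9} then gives weak convergence to $0$.

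The only point requiring genuine care is the exponent bookkeeping in matching $\|K_z\|_{p,\alpha}^p$ to the template $I_{c,t}(z)$ of Lemma \ref{lemy15}: one must check that the hypothesis $t>-1$ becomes $\alpha>-1$ and that the sign condition selecting case (b) is $c=(p-1)(2+\alpha)>0$, which holds for all $p>1$, $\alpha>-1$. Everything else is routine, and no separate argument is needed for the weak limit being $0$ rather than some other function, since uniform-on-compacta convergence already identifies the limit. I expect no real obstacle here; this lemma is a straightforward weighted analogue of the classical $\alpha=0$ facts of Axler and Zeng cited above.
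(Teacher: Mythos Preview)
Your proposal is correct and follows exactly the route the paper indicates: the paper does not give a detailed proof but states that the lemma follows from Lemmas~\ref{lemy15} and~\ref{lemy9}, which is precisely what you do. The exponent matching $t=\alpha$, $c=(p-1)(2+\alpha)$ (so that $2+t+c=p(2+\alpha)$) and the simplification $(\alpha+1)\Gamma(1+\alpha)=\Gamma(2+\alpha)$ for the lower bound are right, and your argument for~(b) via uniform-on-compacta convergence is the standard one.
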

By above lemma, it is not hard to check that for each compact operator $S$ on $L^{p}_{a}(dA_{\alpha})(1<p<\infty)$, we have $\widetilde{S}(z) \rightarrow 0$ as $z \rightarrow \partial \mathbb{D}$.

\begin{lemma} \cite[Lemma 5.3]{Jie}\label{lemy1}
Suppose that $1<p<\infty$, $S$ is a bounded operator on $L_{a}^{p}(dA)$  such that
$$\sup_{z\in \mathbb{D}} \|S_{z}1\|_{m} <\infty$$
for some $m>1$, then $\widetilde{S}(z) \rightarrow 0$ as $z \rightarrow \partial \mathbb{D}$ if and only if for every $t\in [1,m)$, $\|S_{z}1\|_{t} \rightarrow 0$ as $z \rightarrow \partial \mathbb{D}$.

\end{lemma}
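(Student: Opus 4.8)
I would split the statement into its two implications. The forward direction is immediate: since $dA(\mathbb{D})=1$ and, by the reproducing property, $(S_z1)(0)=(Sk_z)(z)(1-|z|^2)=\langle Sk_z,k_z\rangle=\widetilde{S}(z)$, we have $|\widetilde{S}(z)|=\bigl|\int_{\mathbb{D}}(S_z1)\,dA\bigr|\le\|S_z1\|_1\le\|S_z1\|_t$ for every $t\ge1$; so if $\|S_z1\|_t\to0$ for even a single $t\in[1,m)$ then $\widetilde{S}(z)\to0$. The converse is where all the work lies.

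For the converse, assume $\widetilde{S}(z)\to0$ and set $M=\sup_z\|S_z1\|_m$. The plan is two reductions followed by the main argument. First, the interpolation inequality $\|S_z1\|_t\le\|S_z1\|_1^{1-\eta}\|S_z1\|_m^{\eta}$ with $\eta=\eta(t,m)<1$, together with $\|S_z1\|_m\le M$, shows it suffices to prove $\|S_z1\|_1\to0$. Second, for $\rho\in(0,1)$ Hölder's inequality gives $\int_{\rho<|w|<1}|S_z1|\,dA\le\|S_z1\|_m(1-\rho^2)^{1-1/m}\le M(1-\rho^2)^{1-1/m}$, which is small uniformly in $z$ once $\rho$ is near $1$; hence it is enough to show that $(S_z1)(w)\to0$ uniformly on each compact subset of $\mathbb{D}$ as $z\to\partial\mathbb{D}$.

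To prove this last statement I would recover it from the scalar datum $\widetilde{S}\to0$ by passing to a holomorphic function of two variables. Put $G_z(s,v)=(S_zK_{\bar s})(v)$ on $\mathbb{D}\times\mathbb{D}$; since $s\mapsto K_{\bar s}$ is holomorphic into $L^p_a(dA)$ and $S_z$ is bounded, $G_z$ is holomorphic on $\mathbb{D}^2$. Three facts are needed. (i) $(S_z1)(v)=G_z(0,v)$, because $K_0\equiv1$. (ii) On the diagonal, $G_z(\bar u,u)=(S_zK_u)(u)=(1-|u|^2)^{-2}\langle S_zk_u,k_u\rangle=(1-|u|^2)^{-2}\widetilde{S}(\varphi_z(u))$ — using that $U_z$ is self-adjoint for the pairing and $U_zk_u=\lambda k_{\varphi_z(u)}$, $|\lambda|=1$ — and this tends to $0$ uniformly for $u$ in any compact set, since $|\varphi_z(u)|\to1$ uniformly there while $\widetilde{S}\to0$ at $\partial\mathbb{D}$. (iii) Writing $S_zK_u=\lambda(1-|u|^2)^{-1}(1-|\varphi_z(u)|^2)\,U_z\bigl(SK_{\varphi_z(u)}\bigr)$ and inserting the pointwise estimate of Lemma~\ref{lemy16} for $SK_{\varphi_z(u)}$ evaluated at $\varphi_z(v)$, together with the Möbius identities for $1-|\varphi_z(\cdot)|^2$ and $1-\overline{\varphi_z(u)}\varphi_z(v)$, all powers of $1-|z|^2$ cancel and one is left with $|G_z(s,v)|\le M(1-|s|^2)^{-2/m}(1-|v|^2)^{-2/m}|1-sv|^{(4-2m)/m}$, a bound free of $z$; hence $\{G_z\}_{z\in\mathbb{D}}$ is locally uniformly bounded, so a normal family on $\mathbb{D}^2$. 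If $(S_{z_j}1)=G_{z_j}(0,\cdot)$ failed to converge to $0$ uniformly on some compact set along some $z_j\to\partial\mathbb{D}$, we could pass to a subsequence with $G_{z_j}\to G$ locally uniformly, $G$ holomorphic on $\mathbb{D}^2$; by (ii), $G(\bar u,u)=0$ for all $u$, and expanding $G(s,v)=\sum_{j,k}c_{jk}s^jv^k$, putting $s=\bar u=re^{-i\theta}$, and matching Fourier coefficients in $\theta$ forces every $c_{jk}=0$, so $G\equiv0$ — contradicting the assumed failure. This yields the uniform decay and, with the two reductions, the theorem.

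The hard part is exactly the last paragraph: turning the single quantity $\widetilde{S}(z)\to0$, which a priori controls only the value of $S_z1$ at the origin, into uniform control of $S_z1$ on all of $\mathbb{D}$. The decisive mechanism is that a function holomorphic on $\mathbb{D}^2$ and vanishing on the totally real ``anti-diagonal'' $\{(\bar u,u):u\in\mathbb{D}\}$ must vanish identically; and one must be careful to obtain the $z$-uniform bound in (iii) conformally (through Lemma~\ref{lemy16} after transporting by $U_z$), since $S_z$ itself is not uniformly bounded on $L^p_a(dA)$ when $p\ne2$.
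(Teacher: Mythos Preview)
The paper does not prove this lemma at all: it is quoted verbatim as \cite[Lemma~5.3]{Jie} and only the remark that ``by Lemma~\ref{lemy15} and the method in Lemma~\ref{lemy1}'' the same statement extends to $L^p_a(dA_\alpha)$ is added. So there is no in-paper proof to compare against; your write-up is a self-contained reconstruction of the Miao--Zheng argument, and it is correct.

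Your forward direction, the two reductions (interpolation to $t=1$, then H\"older to the tail), and the core step are all sound. In particular, the conformal computation in (iii) is right: transporting $S_zK_u$ through $U_z$ and applying the paper's Lemma~\ref{lemy16} to $SK_{\varphi_z(u)}(\varphi_z(v))$, the powers of $1-|z|^2$, $|1-\bar z u|$ and $|1-\bar z v|$ cancel exactly (exponents $2-\tfrac{4}{m}-2(1-\tfrac{2}{m})=0$, etc.), leaving the $z$-free bound $M(1-|s|^2)^{-2/m}(1-|v|^2)^{-2/m}|1-sv|^{(4-2m)/m}$ you state. The identity $\langle S_z k_u,k_u\rangle=\widetilde{S}(\varphi_z(u))$ is justified because $U_z$ is symmetric for the integral pairing (a change of variables $w\mapsto\varphi_z(w)$ using $k_z(\varphi_z(\zeta))k_z(\zeta)=1$ gives $\langle U_zf,g\rangle=\langle f,U_zg\rangle$), and all vectors involved lie in every $L^p_a$, so the $p\neq2$ setting causes no trouble. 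Finally, the ``anti-diagonal'' uniqueness argument ($G(\bar u,u)\equiv0\Rightarrow G\equiv0$ via Fourier in $\theta$ at $s=re^{-i\theta}$, $v=re^{i\theta}$) is the standard device, and your normal-family contradiction is correctly set up. This is precisely the line of proof in \cite{Jie}; the only cosmetic difference is that you invoke the present paper's Lemma~\ref{lemy16} for the uniform pointwise bound, which is a clean way to package that step.
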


By Lemma \ref{lemy15} and the method in Lemma \ref{lemy1}, it is not hard to check that the result in Lemma \ref{lemy1} also holds on  Bergman space $L^{p}_{a}(dA_{\alpha})$.

\begin{theorem}\label{the3}
Suppose that $1<p<\infty$, $\alpha>-1$ and $S$ is a linear operator on $L_{a}^{p}(dA_{\alpha})$, if there exist a positive constant $C$ and some $m>p\frac{2+\alpha}{1+\alpha} \max\big\{1,\frac{1}{p-1}\big\}$, such that $\|S_{z}1\|_{m,\alpha}\leq C$ for all $z \in \mathbb{D}$, then $S$ is compact if and only if $\widetilde{S}(z) \rightarrow 0$ as $z \rightarrow \partial \mathbb{D}$.
\end{theorem}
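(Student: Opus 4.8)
The plan is to treat the two implications separately, with the substance in the sufficiency direction. \emph{Necessity} is essentially recorded in the text already: by Theorem \ref{the1} the hypothesis makes $S$ bounded on $L_{a}^{p}(dA_{\alpha})$, and if $S$ is compact then for any sequence $z_{j}\to\partial\mathbb{D}$ the functions $K_{z_{j}}/\|K_{z_{j}}\|_{p,\alpha}\to 0$ weakly (Lemma \ref{lemy3}(b)), so $S\bigl(K_{z_{j}}/\|K_{z_{j}}\|_{p,\alpha}\bigr)\to 0$ in norm; writing $\widetilde{S}(z_{j})$ in the equivalent form $\bigl\langle S(K_{z_{j}}/\|K_{z_{j}}\|_{p,\alpha}),\,K_{z_{j}}/\|K_{z_{j}}\|_{q,\alpha}\bigr\rangle$ and applying H\"older's inequality with $\bigl\|K_{z_{j}}/\|K_{z_{j}}\|_{q,\alpha}\bigr\|_{q,\alpha}=1$ gives $\widetilde{S}(z_{j})\to 0$.

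For the \emph{sufficiency}, assume $\widetilde{S}(z)\to 0$ as $z\to\partial\mathbb{D}$. Since $L^{p}_{a}(dA_{\alpha})$ is reflexive for $1<p<\infty$ (a closed subspace of the reflexive space $L^{p}(dA_{\alpha})$), it is enough to prove that $S$ is completely continuous, i.e.\ $\|Sf_{n}\|_{p,\alpha}\to 0$ whenever $f_{n}\to 0$ weakly, since weak sequential compactness of the closed unit ball then forces $S$ to be compact. The hypothesis $m>p\frac{2+\alpha}{1+\alpha}\max\{1,\frac{1}{p-1}\}$ is strict, so fix $m'$ with $p\frac{2+\alpha}{1+\alpha}\max\{1,\frac{1}{p-1}\}<m'<m$; as $dA_{\alpha}$ is a probability measure, $\|S_{z}1\|_{m',\alpha}\le\|S_{z}1\|_{m,\alpha}\le C$ for every $z$, and by the weighted form of Lemma \ref{lemy1} (valid on $L^{p}_{a}(dA_{\alpha})$) together with $\widetilde{S}(z)\to 0$ we get $\|S_{z}1\|_{m',\alpha}\to 0$ as $z\to\partial\mathbb{D}$.

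Now let $f_{n}\to 0$ weakly; by Lemma \ref{lemy9} the norms $\|f_{n}\|_{p,\alpha}$ are uniformly bounded, and the atomic decomposition of Lemma \ref{lem2} gives
$$f_{n}(z)=\sum_{k=1}^{\infty}c_{k,n}\frac{(1-|a_{k}|^{2})^{\frac{(p-1)(2+\alpha)}{p}}}{(1-\overline{a_{k}}z)^{2+\alpha}},\qquad \sum_{k=1}^{\infty}|c_{k,n}|^{p}\le C\|f_{n}\|_{p,\alpha}^{p}\le M,$$
with $M$ independent of $n$ and $\sum_{|a_{k}|\le R}|c_{k,n}|^{p}\to 0$ as $n\to\infty$ for each fixed $R<1$. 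Following the proof of Theorem \ref{the1} with $m'$ in place of $m$, but keeping the factors $\|S_{a_{k}}1\|_{m',\alpha}^{p}$ rather than bounding them by $C^{p}$ at the last step, yields a constant $C'=C'(m',p,\alpha,r)$ with
$$\|Sf_{n}\|_{p,\alpha}^{p}\le C'\sum_{k=1}^{\infty}|c_{k,n}|^{p}\,\|S_{a_{k}}1\|_{m',\alpha}^{p}.$$
Given $\varepsilon>0$, pick $R<1$ with $\|S_{z}1\|_{m',\alpha}^{p}<\varepsilon$ for $|z|>R$; then $\sum_{|a_{k}|>R}|c_{k,n}|^{p}\|S_{a_{k}}1\|_{m',\alpha}^{p}\le\varepsilon\sum_{k}|c_{k,n}|^{p}\le\varepsilon M$ and $\sum_{|a_{k}|\le R}|c_{k,n}|^{p}\|S_{a_{k}}1\|_{m',\alpha}^{p}\le C^{p}\sum_{|a_{k}|\le R}|c_{k,n}|^{p}\to 0$ as $n\to\infty$. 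Thus $\limsup_{n}\|Sf_{n}\|_{p,\alpha}^{p}\le C'\varepsilon M$ for every $\varepsilon>0$, so $\|Sf_{n}\|_{p,\alpha}\to 0$ and $S$ is compact.

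The only real work is bookkeeping: one must check that the computation in the proof of Theorem \ref{the1} really delivers the per-atom bound with $\|S_{a_{k}}1\|_{m',\alpha}^{p}$ as a coefficient (so the head/tail splitting is legitimate), and that the strict inequality in the hypothesis leaves room for an $m'$ which simultaneously makes Lemma \ref{lemy1} applicable and keeps every integral estimate of Theorem \ref{the1} valid. No analytic ingredient beyond Theorem \ref{the1}, Lemma \ref{lem2}, Lemma \ref{lemy1} and the reflexivity of $L^{p}_{a}(dA_{\alpha})$ is needed.
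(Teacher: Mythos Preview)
Your proposal is correct and follows essentially the same route as the paper's proof: Theorem \ref{the1} for boundedness, Lemma \ref{lemy3} for necessity, and for sufficiency the combination of Lemma \ref{lemy1}, the atomic decomposition of Lemma \ref{lem2}, the per-atom estimate extracted from the proof of Theorem \ref{the1}, and the head/tail split over $|a_{k}|\le R$ versus $|a_{k}|>R$. Your version is in fact slightly more careful than the paper's in one respect: you introduce an intermediate exponent $m'$ with $p\frac{2+\alpha}{1+\alpha}\max\{1,\frac{1}{p-1}\}<m'<m$, since Lemma \ref{lemy1} only yields $\|S_{z}1\|_{t,\alpha}\to 0$ for $t\in[1,m)$, whereas the paper tacitly uses $\|S_{z}1\|_{m,\alpha}\to 0$; your treatment closes that small gap without changing the argument.
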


\begin{proof}
By Theorem \ref{the1}, the assumption yields that $S$ is bounded $L_{a}^{p}(dA_{\alpha})$. If $S$ is compact, by Lemma \ref{lemy3},  $\widetilde{S}(z) \rightarrow 0$ as $z \rightarrow \partial \mathbb{D}$.

If $\widetilde{S}(z) \rightarrow 0$ as $z \rightarrow \partial \mathbb{D}$, then by Lemma \ref{lemy1},
to obtain the desired result, we only need to show that if $\|S_{z}1\|_{m,\alpha}\rightarrow 0$ as $z\rightarrow \partial\mathbb{D}$, then $S$ is compact.

Suppose $f_{n}\rightarrow 0$ weakly in $L^{p}_{a}(dA_{\alpha})$ as $n \rightarrow \infty$. To show that $S$ is compact, it is sufficient to prove that $\|Sf_{n}\|_{p,\alpha}\rightarrow 0$ as $n\rightarrow \infty$.
Lemma \ref{lemy2} gives that
$$f_{n}(w)=\sum_{k=1}^{\infty} c_{k,n}\frac{(1-|a_{k}|^{2})^{\frac{(p-1)(2+\alpha)}{p}}}{(1-w\overline{a_{k}})^{2+\alpha}},$$
where $\{c_{k,n}\}_{k\geq1}\in l^p$ and $\{a_k\}$ is an $r$-lattice on $\mathbb{D}$.

By the proof in Theorem \ref{the1}, we know that
\begin{equation}\nonumber
  \|Sf_n\|_{p,\alpha}^{p} \leq C\sum_{k=1}^{\infty} |c_{k,n}|^{p}\|S_{a_{k}}1\|_{m,\alpha}^{p},
\end{equation}
with positive constant $C$.
Since $f_{n}\rightarrow 0$ weakly, Lemma \ref{lem2} shows that for any fixed $0<R<1$
$$\sum_{|a_{k}|\leq R} |c_{k,n}|^{p} \rightarrow 0~~\text{as}~~ n \rightarrow \infty,$$
i.e., for any $\varepsilon >0$, there exists $N>0$, if $n>N$, then
\begin{equation}\label{f5}
  \sum_{|a_{k}|\leq R} |c_{k,n}|^{p} <\varepsilon
\end{equation}
 Since $\|S_{z}1\|_{m,\alpha}\rightarrow 0 $ as $z\rightarrow \partial \mathbb{D}$, then for the $\varepsilon$ in $(\ref{f5})$, there exists $R<1$, such that
$$\|S_{z}1\|_{m,\alpha} <\varepsilon, \text{if }|z|>R .$$
This means that
$$ \sum_{|a_{k}|>R} |c_{k,n}|^{p}\|S_{a_{k}}1\|_{m,\alpha}^{p} \leq \|f_{n}\|_{p}^{p} \varepsilon^{p} \leq C_{f}^{p} \varepsilon^{p},$$
where $C_{f} =\sup_{n} \|f_{n}\|_{p}$ is bounded, because Lemma \ref{lemy9} gives that $\{f_{n}\}$ is bounded in $L^{p}_{a}(dA_{\alpha})$. Therefore,
\begin{equation}\nonumber
  \begin{split}
    \sum_{k=1}^{\infty} |c_{k,n}|^{p}\|S_{a_{k}}1\|_{m,\alpha}^{p}
    &=\sum_{|a_{k}|\leq R} |c_{k,n}|^{p}\|S_{a_{k}}1\|_{m,\alpha}^{p}+ \sum_{|a_{k}|>R} |c_{k,n}|^{p}\|S_{a_{k}}1\|_{m,\alpha}^{p}\\
    &\leq C^{p} \varepsilon +C_{f}^{p} \varepsilon^{p},
  \end{split}
\end{equation}
which shows that  $\|Sf_{n}\|_{p,\alpha}\rightarrow 0$ as $n\rightarrow 0$.
The proof is complete.

\end{proof}

\end{document}